\newtheorem{thm}{Theorem} \newtheorem{propo}{Proposition} 
\newtheorem{lemma}{Lemma}  \newtheorem{coro}{Corollary} \setlength{\parindent}{0cm} 
\definecolor{red1}{rgb}{1,0.9,0.9} \definecolor{blue1}{rgb}{0.9,0.9,1} \definecolor{green1}{rgb}{0.9,1,0.9} 
\definecolor{yellow1}{rgb}{1,1,0.9} \definecolor{yellow2}{rgb}{1,1,0.8}
\let\paragraph\subsection 
\newcommand{\TT}{\mathbb{T}}  \newcommand{\QQ}{\mathbb{Q}} \newcommand{\ZZ}{\mathbb{Z}} \newcommand{\RR}{\mathbb{R}} \newcommand{\CC}{\mathbb{C}} \newcommand{\FF}{\mathbb{F}}
\newcommand{\DD}{\mathbb{D}} \newcommand{\PP}{\mathbb{P}}
\title{An Elementary Dyadic Riemann Hypothesis}
\author{Oliver Knill}
\date{January 11, 2018}
\address{Department of Mathematics \\ Harvard University \\ Cambridge, MA, 02138 }
\subjclass{05Exx, 
           58J50, 
           15A36} 
\keywords{Zeta functions, simplicial complexes, functional equation}
\begin{document}
\maketitle

\begin{abstract}
The connection zeta function of a finite abstract simplicial complex $G$
is defined as $\zeta_L(s)=\sum_{x \in G} \lambda_x^{-s}$, where $\lambda_x$ are
the eigenvalues of the connection Laplacian $L$ defined by $L(x,y)=1$ if
$x$ and $y$ intersect and $0$ else. (I) As a consequence of the spectral formula
$\chi(G)=\sum_x (-1)^{{\rm dim}(x)} = p(G)-n(G)$,
where $p(G)$ is the number of positive eigenvalues and $n(G)$ is the number
of negative eigenvalues of $L$, both the Euler characteristic
$\chi(G)=\zeta(0)-2 i \zeta'(0)/\pi$ as well as determinant 
${\rm det}(L)=e^{\zeta'(0)/\pi}$ can be written in terms of $\zeta$. 
(II) As a consequence of the generalized Cauchy-Binet formula for the
coefficients of the characteristic polynomials of a product of matrices we show 
that for every one-dimensional simplicial complex $G$, the functional equation 
$\zeta_{L^2}(s)=\zeta_{L^2}(-s)$ holds, where $\zeta_{L^2}(s)$ 
is the Zeta function of the positive definite squared connection operator $L^2$ of $G$. 
Equivalently, the spectrum $\sigma$ of the integer matrix 
$L^2$ for a $1$-dimensional complex always satisfies the 
symmetry $\sigma = 1/\sigma$ and the characteristic polynomial
of $L^2$ is palindromic. The functional equation extends to products of one-dimensional
complexes. (III) Explicit expressions for the spectrum of circular connection
Laplacian lead to an explicit entire zeta function in the Barycentric limit. 
The situation is simpler than in the Hodge Laplacian $H=D^2$ case \cite{KnillZeta},
where no functional equation was available. In the connection Laplacian case, the limiting zeta 
function is a generalized hypergeometric function which for an integer $s$ is 
given by an elliptic integral over the real elliptic curve $w^2=(1+z)(1-z)(z^2-4z-1)$, 
which has the analytic involutive symmetry $(z,w) \to (1/z,w/z^2)$.
\end{abstract}

\section{Introduction}

\paragraph{}
Zeta functions bind various parts of mathematics. They can be 
defined dynamically, spectrally, geometrically or arithmetically. 
In a dynamic setup, 
one considers prime periodic points of a dynamical system like
closed geodesic loops or automorphisms. In an analytic setup, the zeta function 
is defined from eigenvalues of an operator, usually the Laplacian.
In arithmetic cases, one considers Dedekind zeta functions 
of an algebraic number field and more generally at L-functions.

\paragraph{}
The simplest zeta functions are elementary in the sense that they are given by finite sums and 
defined by a self-adjoint integer valued matrix. In the context of simplicial complexes, 
the Bowen-Lanford zeta functions $\zeta_{BL}(s)=1/{\rm det}(1-s A)$ appeared, where $A$ is an 
adjacency matrix. It is a rational function,
and a special case of an Artin-Mazur-Ruelle zeta function, where the system is a subshift of
finite type defined by $A$. From any selfadjoint matrix $A$ one can also define a spectral zeta 
function. In the case of a positive definite matrix with eigenvalues $\lambda_k$, 
this is an entire function $\sum_k \exp(a_k s)$ with real 
$a_k = \log(\lambda_k)$. Both cases are relevant when we look
at the connection Laplacian $L$ of a finite abstract simplicial complex. 
We know there that $\zeta_{BL}(-1)=(-1)^{f(G)}$, where $f(G)$ is
the number of odd dimensional simplices in $G$ \cite{Unimodularity}. 
In the present paper however, we look at the spectral zeta function of a complex $G$.
It determines the Euler characteristic and in certain cases
the cohomology of the geometry. We then focus on mostly on the $1$-dimensional case, where 
we have more symmetry, in particular a functional equation. 

\paragraph{}
The archetype of all zeta functions is the {\bf Riemann zeta function} 
$\sum_{n=1}^{\infty} n^{-s}$. It is
associated to the circle $\TT=\RR/\ZZ$ and its Dirac operator $D=i \partial_x$ with 
spectrum $\ZZ$ as $D e^{i n x} = -n e^{i n x}$. The spectral picture is
normalized by taking $\zeta_{D^2}(s/2)$ and disregarding the zero eigenvalue.
The relation to the arithmetic of rational primes is given by the Euler formula
$\prod_p (1-p^{-s})^{-1}$ so that $\zeta(s)$ is the Dedekind zeta function of the field of 
rational numbers $\QQ$. Riemann looked at the Chebyshev function $\sum_{n \leq x} \Lambda(n)$
where $\Lambda$ is the Mangoldt function and gave the Riemann-Mangoldt formula
$\psi(x) = x - \sum_w \frac{x^w}{w} - \log(2\pi) - \frac{1}{2} \log(1-x^{-2})$,
where the sum is taken over all non-trivial roots $w$ and the remaining expressions are due
to the single pole respectively the trivial zeros $-2,-4,-6, \dots $. 
Pairing complex conjugated non-trivial roots $w_j=a_k+ib_j=|w_j|e^{i \alpha_j},\overline{w}$ gives
$f_j(x) = e^{\log(x) a_j} 2 \cos( \log(x) b_j-\alpha_j)/|a_j+i b_j|$,
functions playing the tunes of the music of the primes. As the Riemann hypothesis
is widely considered an important open problem in mathematics, the topic has been exposited in many places 
\cite{Bombieri92,Conrey,Derbyshire,MusicPrimes,Sabbagh,Rockmore,Watkins2010,VeenCraats,MazurStein}. 

\paragraph{}
When replacing the circle $\TT$ with a finite circular graph $C_n$,
Zeta functions become entire or rational functions. In the case of spectral zeta functions
we have entire functions. Unlike for the Riemann zeta function, no analytic continuation is required. The 
Barycentric limit $C_{2^n}$ is not the compact topological group $\TT$ but the 
profinite compact topological group $\DD_2$ of dyadic integers. 
We will see that for all $1$-dimensional complexes with connection Laplacian $L$, the 
functional equation $\zeta(s)=\zeta(-s)$ holds for $\zeta(s)=\zeta_{L^2}(s/2)$. 
In the Barycentric limit, we then have still an entire zeta function.

\paragraph{}
While for circular graphs $C_n$, the roots of $\zeta(s)$ are symmetric to the imaginary axes, 
in the Barycentric limit, all eigenvalues appear on the imaginary axes. 
We have an explicit limiting function
but currently still lack a reference which confirms that all roots are on the imaginary axes. 
Unlike in the circle case, the distribution of the roots in the dyadic case is very regular. 
This could be related to the fact that the Pontryagin dual of the dyadic group of integers $\DD_2$ 
(which is a subgroup of the dyadic numbers $\RR_2$ is the Pr\"ufer group
$\PP_2=\RR_2/\DD_2$ which is a divisible Abelian group 
(meaning $n \PP_2 = \PP_2$ for every integer $n \geq 1$), 
while the Pontryagin dual $\ZZ$ of the circle group $\TT=\RR/\ZZ$ is not divisible. 

\paragraph{}
The dyadic analogy is to pair up the spaces as follows: $\RR \leftrightarrow \RR_2$,
$\ZZ \leftrightarrow \DD_2$ and $\TT=\RR/\ZZ \leftrightarrow \PP_2=\RR_2/\DD_2$.
The Riemann zeta function belongs to the non-discrete 
compact topological group $\TT$ with non-divisible dual group $\ZZ$ which features
interesting primes while the Dyadic zeta function belongs to the discrete 
compact topological group $\DD_2$ with divisible dual group $\PP_2$ harbors
no interesting primes.

\paragraph{}
While we deal here with a spectral zeta function, also the Bowen-Lanford Zeta function
$\zeta(z) = 1/\det(1-z A)$ of a finite matrix 
plays a role for simplicial complexes. Using $N_n = {\rm tr}(A^n)$, counting periodic paths, one can
write $-\log(\det(1-z A)) = -{\rm tr}(\log(1-z A)) = \sum_{n=1}^{\infty} z^n N_n/n$.
If $A$ is the adjacency matrix of a graph, then $N_n$ is the number of periodic
points of size $n$ of the subshift of finite type, the natural Markov process
defined by the graph. A summation over all possible prime orbits $x$
of length $|x|$ gives $\zeta(z) = \prod_{x \; {\rm prime}}(1-z^{|x|})^{-1}$.
With $p=e^{|x|}$ and $s=-\log(z)$, this is Ihara
type zeta function $\prod_{x \; {\rm prime}} (1-p(x)^{-s})^{-1}$
looks a bit like the Riemann zeta function $\prod_p (1-p^{-s})^{-1}$.

\paragraph{}
A continuum limit for finite zeta functions on simplicial complexes can be obtained by taking 
Barycentric limits $\lim_{n \to \infty} G_n$ of the complex $G=G_0$. 
The Barycentric refinement $G_{n+1}$ of a complex $G_n$ is the Whitney complex
of the graph, where $G_n$ is the set of vertices and two vertices 
are connected, if one is contained in the other. 
It leads to a limiting almost periodic Laplacian on the compact 
topological group $\DD_2$ of dyadic integers. For the connection Laplacian, the limiting 
Laplacian has a ``mass gap". In the dyadic case, the Laplacian remains nice and
invertible even in the continuum limit. 
The Barycentric limits in the higher dimensional case ${\rm dim}(G) \geq 2$ are not well 
understood, but if we take products \cite{StrongRing} of $1$-dimensional Barycentric limits, 
we get higher dimensional models of spaces for which the physics is trivial in the sense that all Green
function values $L^{-1}(x,y)$ are still bounded. Also, the functional equation extends to products of 
$1$-dimensional simplicial complexes. And also in the Barycentric limit,  we have 
$\zeta_{G \times H}(s)=\zeta_G(s) \zeta_G(s)$ because
$(\sum_k \lambda_k^{-s}) (\sum_l \mu_k^{-s}) = \sum_{k,l} (\lambda_k \mu_l)^{-s}$
in the finite dimensional case. 

\paragraph{}
Similarly as with the limit Hodge Laplacian, where the Barycentric zero-locus is
supported on the line ${\rm Re}(s)=1$, we see here that the roots of the limit connection Laplacian are 
supported on the imaginary axes ${\rm Re}(s)=0$.  The functional equation definitely simplifies the analysis. 
Whatever Laplacian we have in mind (like the Hodge Laplacian, the connection Laplacians or
adjacency matrices), the limiting density of states is universal and only dimension dependent
\cite{KnillBarycentric,KnillBarycentric2}. While this universality holds in any dimension, only the 
$1$-dimensional case, it appears to be ``integrable" in the sense that the expressions of the density of states
are explicit. In higher dimensions we see density of states measures which have gaps and possibly
singular parts. Also the roots of the zeta function do not appear to
obey a linear functional equation any more. We will explore in a futher paper that there are truly
higher dimensional cases (not only products of one dimensional cases), 
where the operator $L$ can be deformed to have a functional equation as in one dimension.

\paragraph{}
When looking at finite dimensional approximations of a limiting case, we have to define
convergence. The measure supported by the roots of the zeta function converges in the following
sense: take a compact set $K$ and put a Dirac measure $1$ on each root of the Zeta function to get
a measure $\mu_n$ for each Barycentric refinement $G_n$ of $G$. 
We will show that $\mu_n$ restricted to $K$ 
converges weakly to a limiting measure $\mu$ on $K$. As we know the function $f$ which has $\mu$ as the
zero-locus measure, we expect the measure $\mu$ to sit on the imaginary axes ${\rm Re}(z)=0$. 
As we have shown in the Hodge paper \cite{KnillZeta}, every accumulation point lies there on the 
axes ${\rm Re}(s)=1$. This corresponds to ${\rm Re}(s)=1/2$ for the Dirac case. Some relation of 
the finite dimensional and Riemann zeta case has been pointed out in \cite{FriedliKarlsson}.

\paragraph{}
Spectral zeta function are defined for any geometric object with a Laplacian; in particular
they are defined for manifolds or simplicial complexes.
An inverse spectral problem is to read off the cohomology of the geometry from $\zeta$. 
We have seen that the spectrum of $L$ does not determine the cohomology, even for
$1$-dimensional complexes \cite{HearingEulerCharacteristic}. 
We will see however in future work that the cohomology can easily be read of from the
spectrum of $1$-dimensional complexes which are Barycentric refinements. The connection is very direct: 
the eigenvalues $\lambda=1$ correspond to eigenfunctions supported on a zero-dimensional
connected component of the complex: as a Barycentric refinement of a $1$-dimensional 
complex is bipartite, we can take a $2$-coloring and take a coloring of the vertices 
with with $1$ and $-1$.  This is an eigenfunction as every edge intersects both a value 
$1$ and $-1$ canceling. The eigenvalues $-1$ correspond to eigenfunctions supported on 
the $1$-dimensional parts of the complex. 
A basis for the eigenspace of $\lambda=-1$ can be given by picking 
generators of the fundamental group of the complex and color the edges of that loop 
alternatively with $1$ or $-1$. In some sense, the connection case entangles the zero
and one form eigenvalues nicely. 

\section{Related literature} 

\paragraph{}
The spectral zeta function considered here is 
geometrically defined. It is completely unrelated to a zeta function defined 
in \cite{TaoStructureRandomness} where for any finite
field like $F=\FF_p$, denoting the absolute value of a ``number" $n \in F[t]$
is $|F|^{{\rm deg}(n)}$, the zeta function is $\zeta(s) = \sum_{n \in F[t]_1} |n|^{-s}$
where $F[t]_1$ is the set of monomials in $F[t]$. There is then an explicit
formula $\zeta(s) = 1/(1-p^{s-1})$. 

\paragraph{}
Also completely unrelated is a zeta function for a simplicial
complex $G$ with zero-dimensional components $\{1, \dots, n\}$,
defined in \cite{BjoernerSarkaria}. Given a finite field
$F=F_{q^k}$ one can look at the projective $(n-1)$-space $P$ over $F$ and
let for $x \in G$ the set $V(x,F_{q^k})$ of points in $P$ which
are supported in $x$. As a union of linear subspaces, this is a
projective variety. The zeta function is then
$Z_G(t) = \exp(\sum_{k \geq 1} |V(x,F_{q^k})| t^k/k)$. One of their results
is that $Z_G(t) = \prod_{j=0}^d (1-q^j t)^{-f_j}$, where $d$ is the
dimension of $G$ and $f_j=\chi(G_j)$, where $G_j$ is the $j$'th co-skeleton,
the Whitney complex of the graph with vertex set $\{ x \in G | {\rm dim}(x) \geq j\}$
in which two elements are connected if one is contained in the other. 

\paragraph{}
In \cite{FriedliKarlsson} (apparently unaware of \cite{KnillZeta} which was never
submitted to a journal), the Hodge spectral function 
$\zeta_n(s) = 4^{-s} \sum_{k=1}^n \sin^{2s}(\pi k/n)$
of circular graphs, (which is $\zeta_n(2s)$ in (2) of \cite{KnillZeta}) and
$\zeta_Z(s)=\frac{4^{-s} \Gamma \left(\frac{1}{2}-s\right)}{\sqrt{\pi } \Gamma (1-s)}$
called there  spectral function of the $Z$ are considered. It is
the function $c(2s)$ on page 6 of \cite{KnillZeta}. They then look at
the entire completion
$\xi_Z(s) = 2^s \cos(\pi s/2) \zeta_z(s/2)$ and note
(Theorem 0.2 in \cite{FriedliKarlsson})
that it satisfies the functional equation $\xi_Z(s)=\xi_Z(1-s)$.
Their Theorem 0.3 is close to Theorem 10 and Proposition 9 in \cite{KnillZeta}. Also
some integer values are computed in both papers. 

\paragraph{}
In \cite{FriedliKarlsson} is a nice statement which
shows that $\zeta_n$ has some relation with the Riemann zeta function: introducing
$$  h_n(s)=(4\pi)^{s/2} \Gamma(s/2) n^{-s}(\zeta_n(s/2)-n\zeta_Z(s/2)  \;  $$
the authors there show first that that
$$  \lim_{n \to \infty} |h_n(1-s)/h_n(s)|=1 $$
for every $s$ in the critical strip $0<Re(s)<1$ for which $\zeta(s) \neq 0$. 
Then they show that the statement holds for all $s$ in the critical strip,
if and only the Riemann hypothesis is true. This is hardly a prospective path to prove the Riemann
hypothesis becausse the condition essentially just probes in a fancy way whether
$\zeta(s)$ is zero or not. It is still very interesting as it relates the
Riemann hypothesis with an error in concrete Riemann sums of a concrete 
function parametrized by a complex parameter $s$.

\section{The Morse index}

\paragraph{}
Any selfadjoint matrix $A$ defines a spectral zeta function.
If $\lambda_j$ are the non-zero eigenvalues of an invertible matrix $A$
then its spectral Zeta function is defined as 
$$ \zeta(s) = \sum_k \lambda_k^{-s} \; . $$ 
It satisfies $\zeta'(0) = - \sum_j \log(\lambda_j) 
= -\log( \prod_j \lambda_j) = -\log({\rm det}(A))$.
It is this relation which allows to regularize determinants in geometric settings
where the usual determinant does not make sense. 
This Minakshisundaram - Pleijel approach is used in differential geometry for Laplacians
of a manifold $M$. In all the spectral settings related to $\zeta$ functions, one
disregards the zero eigenvalues, and deal so with 
pseudo determinants. Zeta functions was one motivation to study them better 
in \cite{cauchybinet}. 

\paragraph{}
Like determinants or traces, also the pseudo determinant is one of the
coefficients of the characteristic polynomial $p(x)={\rm det}(A-xI)$. Actually, 
in some sense, the characteristic polynomial is already a zeta function 
as $p(A,z) = (-z)^n/\zeta(1/z)$ for $\zeta(z) = 1/\det(1-z A)$. The roots of the 
characteristic polynomial are the reciprocals of the poles of $\zeta$.
The analogy puts the zeros of a zeta function in the vicinity of the
importance of eigenvalues of a matrix.


\paragraph{}
If $M=\TT$ is the circle then the spectral zeta function is the
classical Riemann zeta function and $\zeta'(0)=-\log(2\pi)/2$. 
The Dirac operator $i d/dx$ has the regularized determinant ${\rm det}(D)=\sqrt{2\pi}$ and 
its square $H=D^2$ satisfies ${\rm det}(H)={\rm det}(D^2)={\rm det}(D)^2 =2\pi$.
The determinant of the Laplacian $H=-d^2/dx^2$ on the circle is the circumference of the circle. 
The zeta regularization shows that one can make sense of the infinite diverging 
product $\prod_{n} n^2$. The classical Riemann example illustrates also how to wash away the 
ambiguities coming from negative eigenvalues by taking $\zeta_{A^2}(s/2)$ rather than $\zeta_A(s)$. 
The example also illustrates the need to transition from determinants to pseudo-determinants.
Riemann however did not look at the zeta function as a spectral 
zeta function but as a tool to investigate primes. 

\paragraph{}
A finite abstract simplicial complex $G$ is a finite set of non-empty sets 
which is closed under the process of taking finite non-empty subsets. 
The connection Laplacian $L$ of a finite abstract simplicial complex $G$ is the 
$n \times n$ matrix $L$ which satisfies $L(x,y)=1$ if two simplices $x,y \in G$ 
intersect and where $L(x,y)=0$ otherwise. For the zeta function $\zeta_L$ of the
connection Laplacian $L$, we have

\begin{propo}
$\zeta_L'(0) = -i \pi n(G)$
\end{propo}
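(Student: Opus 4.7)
The plan is to reduce this to a direct computation of $\zeta_L'(0)$ as a sum of logarithms of eigenvalues, and then to invoke the unimodularity theorem $|\det(L)|=1$ to cancel the absolute-value contribution.

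First I would write $\zeta_L(s)=\sum_x \lambda_x^{-s}$ with the branch convention $\lambda^{-s}=e^{-s\log\lambda}$, where $\log$ denotes the principal branch, so that for negative real $\lambda$ we have $\log\lambda=\log|\lambda|+i\pi$. Since $L$ is a finite self-adjoint integer matrix, all $\lambda_x$ are nonzero reals (by unimodularity, invoked below), and $\zeta_L$ is a finite sum of exponentials, hence entire, and termwise differentiation is legal. Differentiating gives
\begin{equation*}
\zeta_L'(s) = -\sum_x (\log\lambda_x)\,\lambda_x^{-s},
\end{equation*}
and evaluating at $s=0$ yields
\begin{equation*}
\zeta_L'(0) = -\sum_x \log\lambda_x.
\end{equation*}

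Next I would split this sum according to the sign of $\lambda_x$. Writing $p(G)$ for the number of positive eigenvalues and $n(G)$ for the number of negative eigenvalues, the chosen branch of the logarithm gives
\begin{equation*}
\zeta_L'(0) = -\sum_{x:\,\lambda_x>0}\log|\lambda_x| - \sum_{x:\,\lambda_x<0}\bigl(\log|\lambda_x|+i\pi\bigr) = -\log\!\prod_x|\lambda_x| - i\pi\, n(G) = -\log|\det(L)| - i\pi\, n(G).
\end{equation*}

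Finally, the main input is the unimodularity theorem (cited in the excerpt via \cite{Unimodularity}), which asserts that $\det(L)=\pm 1$ for every finite abstract simplicial complex, so $\log|\det(L)|=0$. This gives $\zeta_L'(0)=-i\pi\, n(G)$ as claimed. The only real subtlety is the branch convention for the logarithm on negative reals; once that is fixed in a consistent way (which is also implicit in the companion identities for $\chi(G)$ and $\det(L)$ stated in the abstract), the proposition is immediate. There is no genuine obstacle—the content of the statement is really the unimodularity result together with a careful bookkeeping of how the negative spectrum of $L$ contributes the imaginary part $-i\pi\,n(G)$ to the spectral determinant.
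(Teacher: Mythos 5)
Your proof is correct and follows essentially the same route as the paper: $\zeta_L'(0)=-\sum_x\log\lambda_x=-\log\det(L)$, with the real part killed by the unimodularity theorem $|\det(L)|=1$ and the imaginary part $-i\pi\,n(G)$ coming from the negative eigenvalues under the principal branch. Your write-up merely makes explicit the branch bookkeeping that the paper leaves implicit.
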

\begin{proof}
The unimodularity theorem $|{\rm det}(L)|=1$ implies ${\rm Re}( \log(L) )= 0$ \cite{Unimodularity}. 
Now ${\rm Im}(\log(L)) = i\pi (n(G))$, where $n(G)$ is the number of negative eigenvalues of $L$.
The fact that $n(G)$ is the number of odd-dimensional simplices was proven in 
\cite{HearingEulerCharacteristic}. 
\end{proof}

\begin{coro}[Euler from Zeta]
$\chi(G) = \zeta(0)-2 i \zeta'(0)/\pi$
\label{eulerfromzeta}
\end{coro}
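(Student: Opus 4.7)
The plan is to combine Proposition 1 with a direct evaluation of $\zeta(0)$ and then apply the spectral Euler formula $\chi(G)=p(G)-n(G)$ stated in the abstract.

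First I would evaluate $\zeta_L(0)$. Since $\lambda^{-s}|_{s=0}=e^{-s\log\lambda}|_{s=0}=1$ for every nonzero $\lambda$ (regardless of the branch of $\log$ chosen for negative eigenvalues), and since the unimodularity theorem $|{\rm det}(L)|=1$ guarantees that $L$ has no zero eigenvalues, we obtain
\begin{equation*}
\zeta_L(0)=\sum_{x\in G}1=|G|=p(G)+n(G),
\end{equation*}
where $p(G)$ and $n(G)$ are the numbers of positive and negative eigenvalues of $L$.

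Next I would invoke Proposition 1, which gives $\zeta_L'(0)=-i\pi\,n(G)$. A short arithmetic step then yields
\begin{equation*}
-\tfrac{2i}{\pi}\zeta_L'(0)=-\tfrac{2i}{\pi}(-i\pi\,n(G))=-2\,n(G).
\end{equation*}

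Finally I would combine the two computations:
\begin{equation*}
\zeta_L(0)-\tfrac{2i}{\pi}\zeta_L'(0)=\bigl(p(G)+n(G)\bigr)-2\,n(G)=p(G)-n(G),
\end{equation*}
and this equals $\chi(G)$ by the spectral Euler formula recalled in the abstract (proven in \cite{HearingEulerCharacteristic}). There is essentially no obstacle here once Proposition 1 is in hand; the only subtlety worth flagging is the use of unimodularity to guarantee that every eigenvalue is nonzero, so that $\zeta_L(0)$ literally counts simplices, and the use of the principal branch of $\log$ to identify $\log(-1)$ with $i\pi$ consistently between $\zeta_L'(0)$ and the sign convention $\det(L)=(-1)^{n(G)}$.
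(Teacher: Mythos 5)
Your argument is correct and is essentially the same as the paper's own proof: both evaluate $\zeta(0)=p(G)+n(G)$, use Proposition 1 to extract $n(G)$ from $\zeta'(0)$, and conclude via $\chi(G)=p(G)-n(G)$. Your added remarks on unimodularity ensuring no zero eigenvalues and on the branch of the logarithm are sound elaborations of steps the paper leaves implicit.
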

\begin{proof}
We have $\zeta(0)=p(G)+n(G)$ and $i \zeta'(0)/\pi = n(G)$
so that $\chi(G)=p(G)-n(G) = \zeta(0)-2 i \zeta'(0)/\pi$. 
\end{proof}


\paragraph{}
If we think of $L$ as a Hessian matrix, then the number of negative eigenvalues of $L$
is a Morse index of $L$. We have now, collecting the already established
equality of odd-dimensional simplices and negative eigenvalues: 

\begin{propo}
The Morse index $n(G)$ of $L$ is equal to the number of odd-dimensional simplices in $G$. 
The Poincar\'e-Hopf type index $(-1)^{n(G)}$ satisfies 
$(-1)^{n(G)} = \exp(-i \zeta'_L(0)) = {\rm det}(L(G))$. 
\end{propo}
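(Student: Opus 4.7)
The plan is to derive the statement by combining three ingredients already in hand: the unimodularity theorem $|\det(L)|=1$ from \cite{Unimodularity}, the explicit evaluation $\zeta'_L(0)=-i\pi n(G)$ from Proposition 1, and the combinatorial identification of negative eigenvalues established in \cite{HearingEulerCharacteristic}. No new machinery is required; the proposition is essentially a synthesis that packages these facts in Poincar\'e--Hopf form.

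First, for the Morse-index half of the statement, I would simply invoke \cite{HearingEulerCharacteristic}, where it is shown that the number $n(G)$ of negative eigenvalues of $L$ is exactly the number of odd-dimensional simplices in $G$. This gives the combinatorial characterization of the Morse index at once.

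Next, for the Poincar\'e--Hopf-type identity $(-1)^{n(G)}=\det(L(G))$, I would note that $L$ is real symmetric, so all eigenvalues are real and $\det(L)=\prod_k\lambda_k$ is a real number. Factoring out the signs of the $n(G)$ negative eigenvalues yields
\[
\det(L)=(-1)^{n(G)}\prod_k|\lambda_k|=(-1)^{n(G)}|\det(L)|,
\]
which collapses to $(-1)^{n(G)}$ by unimodularity.

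Finally, the exponential identity follows by plugging Proposition 1 into the principal-branch definition $\zeta'_L(0)=-\log\det(L)$: from $\zeta'_L(0)=-i\pi n(G)$ one recovers $e^{-\zeta'_L(0)}=e^{i\pi n(G)}=(-1)^{n(G)}$, so all three quantities agree. There is no serious obstacle here; the only point that deserves a line of care is the consistent use of the principal branch $\log(\lambda)=\log|\lambda|+i\pi$ for $\lambda<0$, which is already the convention used in the proof of Proposition 1 and which is what makes the identification $\zeta'_L(0)=-\log\det(L)$ legitimate in the presence of the negative spectrum.
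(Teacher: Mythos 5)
Your proposal is correct and takes essentially the same route as the paper: the paper's own proof is the single line that the determinant identity \emph{is} the unimodularity theorem, with the Morse-index half already "collected" from \cite{HearingEulerCharacteristic}, so you are simply filling in the sign-factoring $\det(L)=(-1)^{n(G)}|\det(L)|$ and the branch-of-logarithm bookkeeping explicitly. One point worth flagging rather than passing over silently: what you actually verify is $e^{-\zeta_L'(0)}=(-1)^{n(G)}$, whereas the proposition as printed asserts $e^{-i\zeta_L'(0)}=(-1)^{n(G)}$, which with $\zeta_L'(0)=-i\pi n(G)$ from Proposition 1 would evaluate to $e^{-\pi n(G)}\neq(-1)^{n(G)}$ for $n(G)>0$; the stray factor $i$ is evidently a typo in the statement (the abstract's $\det(L)=e^{\zeta'(0)/\pi}$ is similarly off), and your corrected version is the one that is true, but a referee-quality writeup should say so explicitly.
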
 

\begin{proof}
The fact that $(-1)^{n(G)} = {\rm det}(L(G))$ is the unimodularity theorem. 
\end{proof}

\paragraph{}
There is an obvious ambiguity when defining a zeta function of a general self-adjoint matrix:
if $\lambda$ is a negative number, then $\lambda^{-s} = e^{-s \log(\lambda)}$ requires to chose 
a branch of the logarithm. If $\lambda$ is real, there is no ambiguity. 
But even if we chose a definite branch, the corresponding zeta function is not so nice. 
This is illustrated in Figure~(\ref{2}) where the pictures show the case of a matrix 
with positive and negative spectrum and to the right the zeta function of the square. 
To remove any ambiguities, we can look at 
$$   \zeta(s) = \zeta_{A^2}(s/2) $$ 
as the normalization of the spectral zeta function of a self-adjoint matrix $A$. 
In this paper, we compute in all pictures with $\zeta_{A^2}(s)$, skipping the factor $2$. 

\paragraph{}
The normalization contains less information because we lose information which eigenvalues
are negative and positive. The analytic properties are nicer however for the normalization.
As we have just seen, we can recover the Euler characteristic of a
simplicial complex $G$ from $\zeta_L$ and the trace of $L$ but we have not figured out 
yet whether it is possible to recover $\chi(G)$ from the regularized $\zeta_{L^2}(s/2)$ rather
than from $\zeta(L)$ as in Corollary~(\ref{eulerfromzeta}). The normalization is actually 
something quite familiar: the traditional Riemann zeta function is a normalization 
in this respect already as it ignores the negative eigenvalues of the selfadjoint Dirac operator 
$D=i \partial_x$ on the circle. The Zeta function $\zeta_{D^2}(s/2)$ is then the 
sum $\sum_{n \geq 1} n^{-s}$ of Riemann. 

\begin{figure}
\scalebox{0.12}{\includegraphics{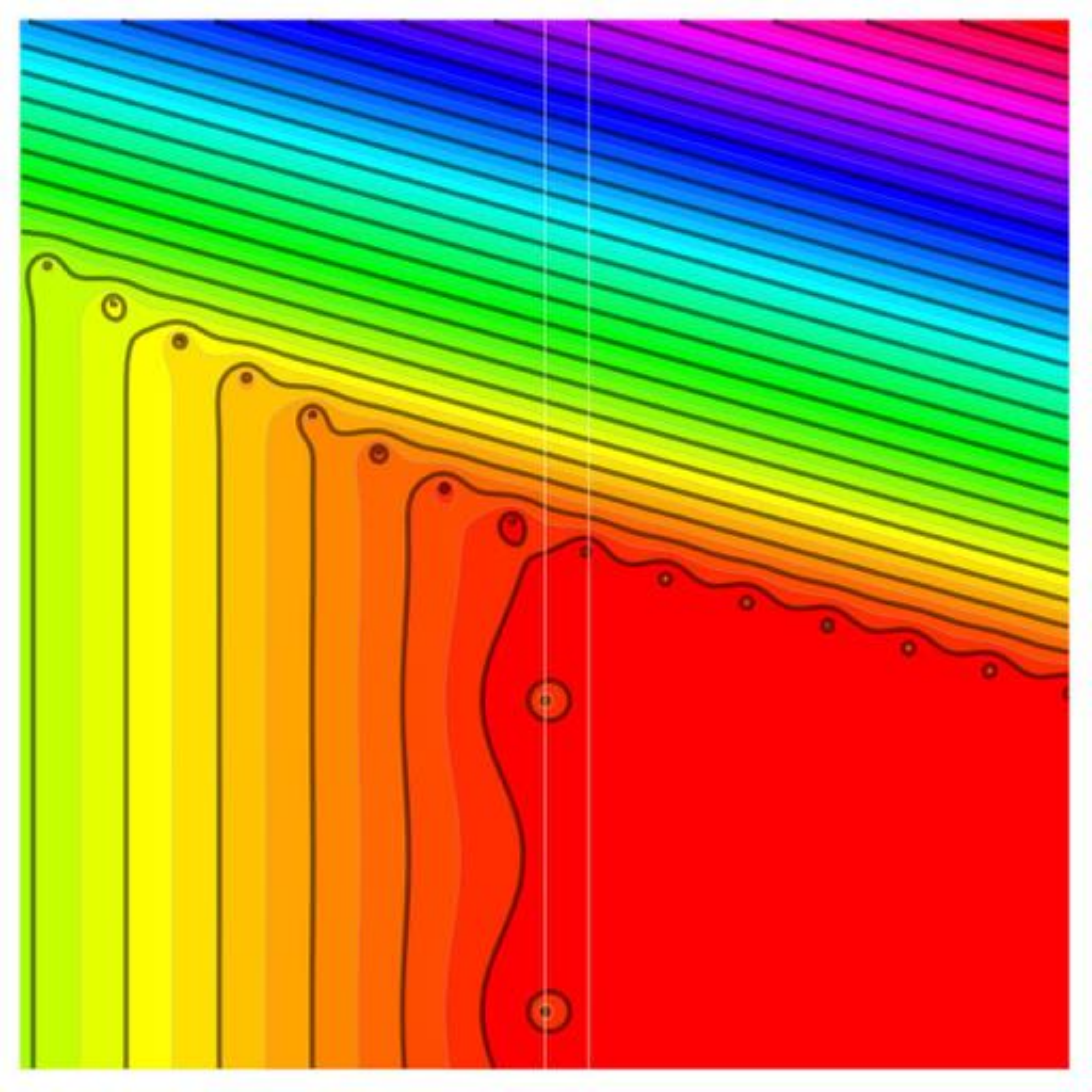}}
\scalebox{0.12}{\includegraphics{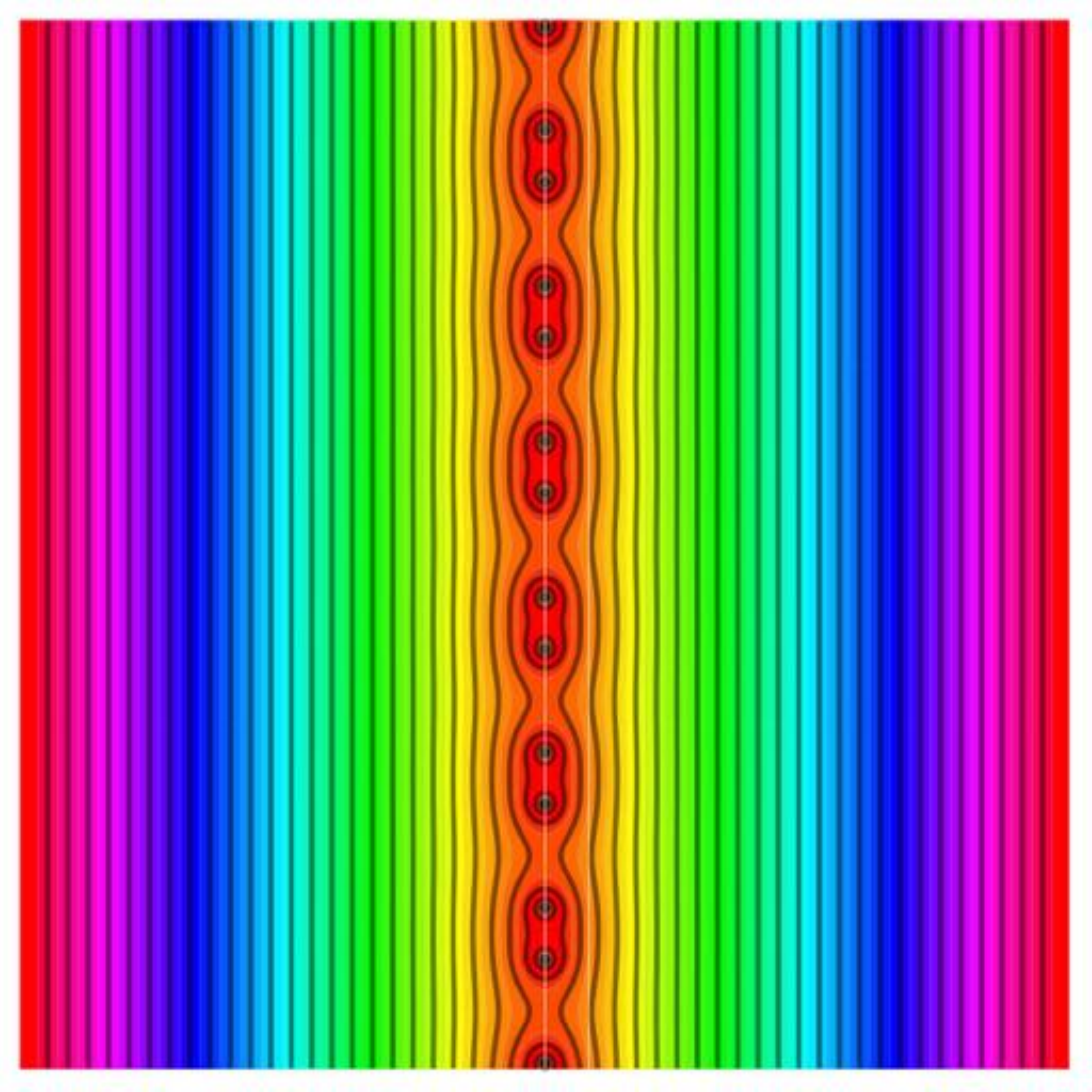}}
\caption{
\label{2}
To the left we see the level curves of $|\zeta_L(s)|=|1+(1-\sqrt{2})^{-s}+(1+\sqrt{2})^{-s}|$
for $G=K_2$. On the right is the normalized case $\zeta_{L^2}(s) = 1+a^{-s}+a^{s}$ with
$a=(1+\sqrt{2})^2$. For simplicity, we don't divide $s$ by $2$.
}
\end{figure}

\paragraph{}
For $G=K_3=\{\{1\},\{2\},\{3\},\{1,2\},\{1,3\},\{2,3\},\{1,2,3\}\}$, the 
eigenvalues of $L$ are 
$\sigma(L)= \{5.511$, $1.618$, $1.618$,$-0.7525$, $-0.618$, $-0.618$, $0.241 \}$
so that 
$$ \zeta_L(s)=(-0.7525)^{-s}+2(-0.618)^{-s}+0.241^{-s}+2 (1.618^{-s})+5.511^{-s} \;  $$
satisfying $\zeta(0)={\rm tr}(I)=7$ and $\zeta'(0)=-3\pi i$.
The normalized $\zeta_{L^2}(s)$ has $\zeta(1)={\rm tr}(L^2)=25$.
But now, $\zeta'(0)=0$ and ${\rm det}(L^2)=1$.
Unlike in the $1$-dimensional case, the spectral symmetry fails,
$\zeta_{L^2}(-1)=37$. Because of unimodularity, the zeta values are integers
for all integer $s$. In this case $\{ \zeta(-3), \dots, \zeta(3) \}$, it is
the list $\{28063, 937, 37, 7, 25, 313, 5131\}$.

\begin{figure}
\scalebox{0.035}{\includegraphics{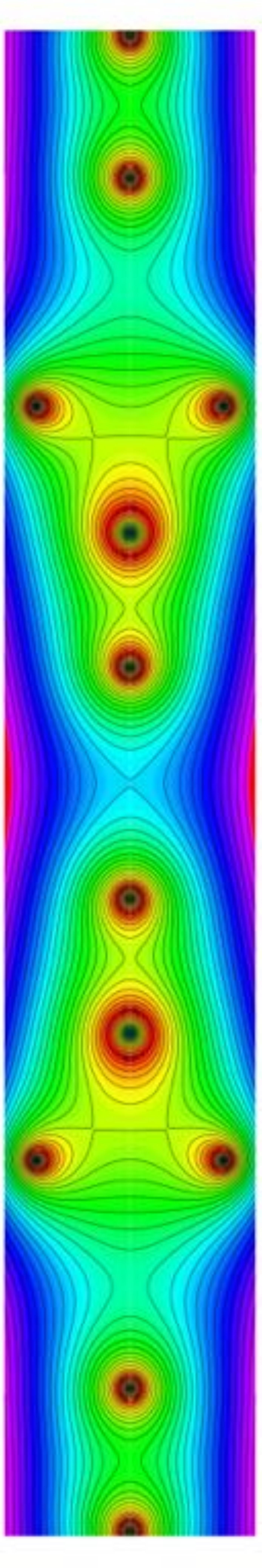}}
\scalebox{0.035}{\includegraphics{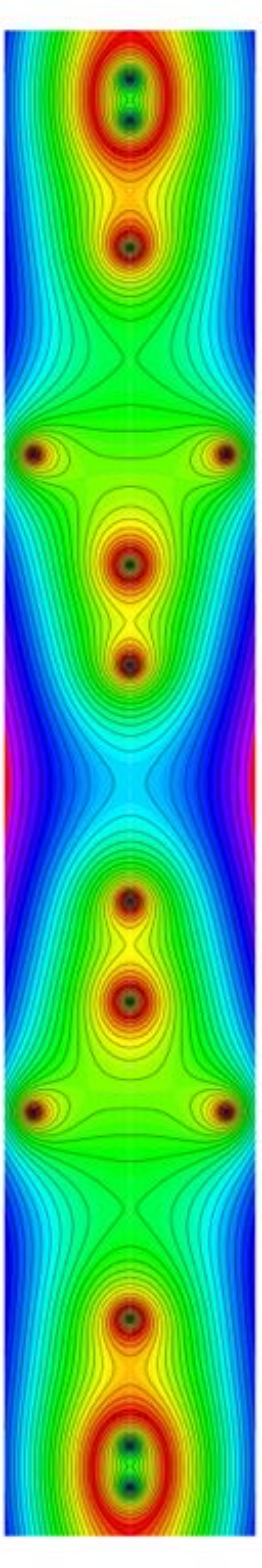}}
\scalebox{0.035}{\includegraphics{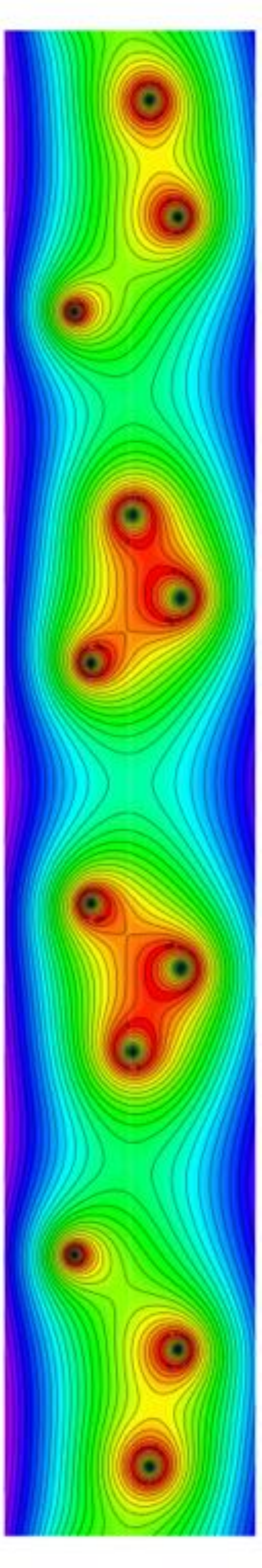}}
\scalebox{0.035}{\includegraphics{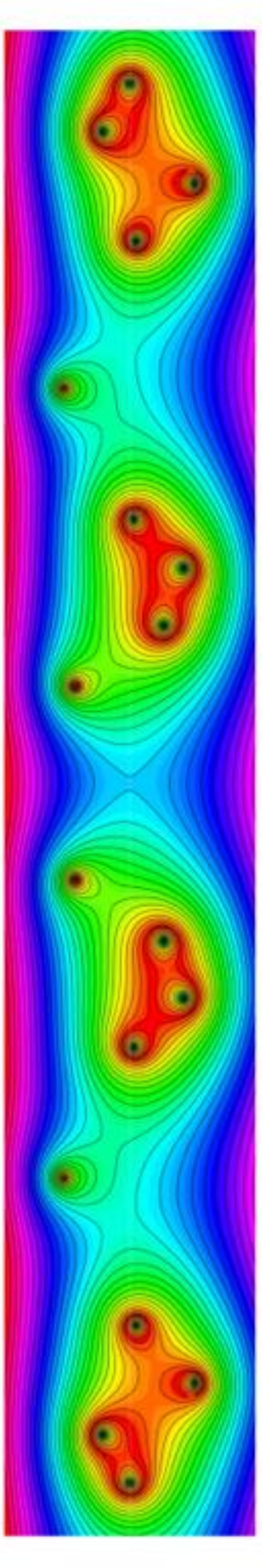}}
\scalebox{0.035}{\includegraphics{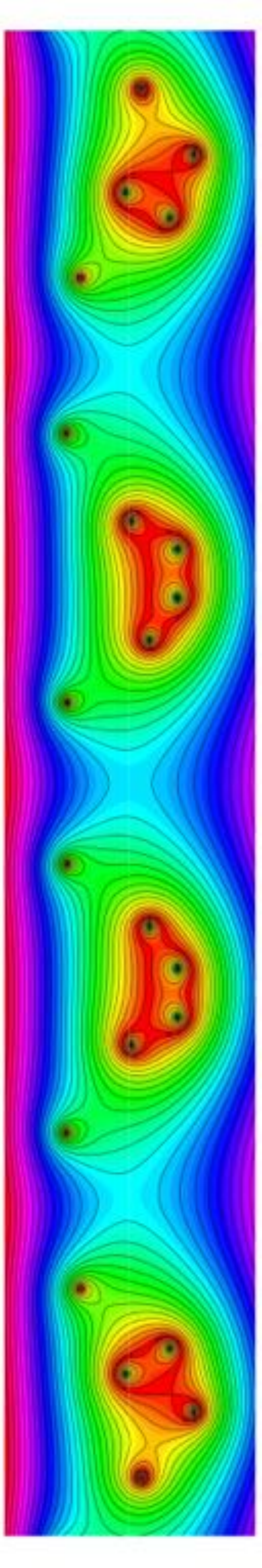}}
\scalebox{0.035}{\includegraphics{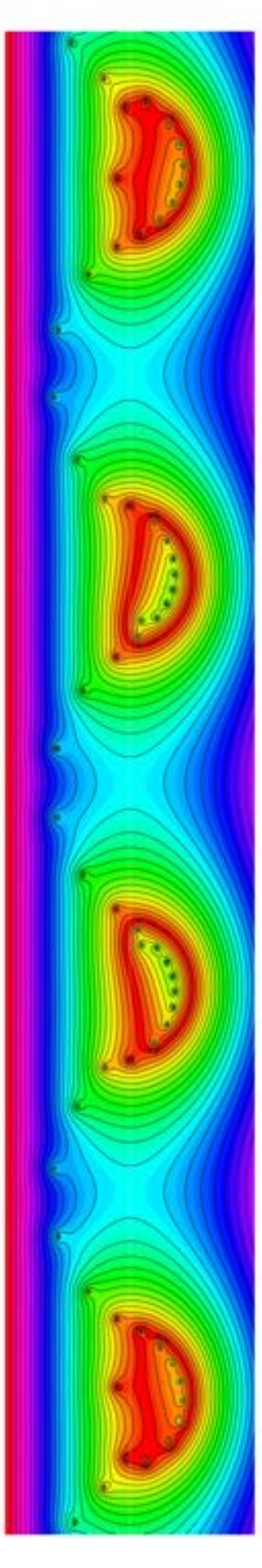}}
\caption{
The level curves of $|\zeta_{L^2}(s)|$ for $C_4$,
figure eight $F_8=C_4 \dot{\cup} C_4$ and
$K_3,K_4,K_5$ and $K_{12}$ on $\{ s=x+iy \; | \; |x|<1,|y|<6 \}$. 
The first two cases show the spectral symmetry $\zeta(s)=\zeta(-s)$.
}
\end{figure}

\section{The roots}


\paragraph{} 
If $G \times H$ is the product of two simplicial complexes $G,H$ in the strong
ring, then we know that the spectra of $G$ and $H$ multiply. This immediately
implies that the zeta functions multiply. This holds in full generality for
all simplicial complexes $G,H$: 

\begin{coro}
If $\zeta_G(s)$ and $\zeta_H(s)$ are the zeta functions of two simplicial 
complexes $G$ and $H$, then $\zeta_{G \times H}(s) = \zeta_G(s) \zeta_H(s)$. 
\end{coro}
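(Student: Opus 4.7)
The plan is to reduce the multiplicativity of the zeta function to the multiplicativity of the spectrum of the connection Laplacian under the strong-ring product, which is already stated in the paragraph preceding the corollary. So the proof is a one-line manipulation of sums, but the plan should make clear which input it is using and why the manipulation is legitimate.

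First I would recall the setup: for the strong-ring product $G \times H$, the connection Laplacian $L_{G \times H}$ is the Kronecker (tensor) product $L_G \otimes L_H$, as recorded in \cite{StrongRing}. From this one reads off immediately that the spectrum of $L_{G \times H}$ is $\{\lambda_k \mu_l\}_{k,l}$, where $\{\lambda_k\}$ and $\{\mu_l\}$ are the eigenvalues of $L_G$ and $L_H$ respectively, each listed with multiplicity. This is the fact the paper invokes just above the corollary. Since the unimodularity theorem ensures $L_G$ and $L_H$ are invertible, every $\lambda_k\mu_l$ is nonzero, so no eigenvalue is discarded and no branch ambiguity is introduced by tensoring.

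Next I would carry out the (finite) computation
\begin{equation*}
\zeta_{G\times H}(s) \;=\; \sum_{k,l} (\lambda_k \mu_l)^{-s}
\;=\; \sum_{k,l} \lambda_k^{-s}\mu_l^{-s}
\;=\; \Bigl(\sum_k \lambda_k^{-s}\Bigr)\Bigl(\sum_l \mu_l^{-s}\Bigr)
\;=\; \zeta_G(s)\,\zeta_H(s),
\end{equation*}
where in the second equality I use that for each nonzero complex number $z=\lambda_k\mu_l$ the exponential $(\lambda_k\mu_l)^{-s}=\lambda_k^{-s}\mu_l^{-s}$ once a consistent branch of the logarithm is fixed (the normalization $\zeta_{L^2}(s/2)$ discussed earlier makes this automatic by passing to positive operators). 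The factorization in the last step is the distributive law; it is legitimate because both sums are finite, so no issue of rearrangement arises.

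The only subtle point, and what I would flag as the main obstacle worth spelling out, is precisely this consistency of the branch of $\lambda^{-s}$ when the original spectra contain negative eigenvalues. I would handle it by observing that the corollary is really a statement about $\zeta_{L^2}(s/2)$: after squaring, all eigenvalues of $L_{G\times H}^{\,2} = L_G^{\,2}\otimes L_H^{\,2}$ are positive, and the argument above goes through verbatim with $\lambda_k^2$ and $\mu_l^2$ in place of $\lambda_k$ and $\mu_l$. For the unnormalized $\zeta_L$, one fixes the principal branch once and for all and notes that $(\lambda_k\mu_l)^{-s}=\lambda_k^{-s}\mu_l^{-s}$ holds identically under this choice. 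No spectral analysis, no analytic continuation, and no functional-equation machinery is needed; the result is a purely algebraic consequence of the tensor-product description of $L_{G\times H}$ established in \cite{StrongRing}.
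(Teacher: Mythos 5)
Your proposal matches the paper's own argument: the paper likewise takes the multiplicativity of the spectra under the strong-ring product as the given input and deduces the corollary from the finite-sum identity $(\sum_k \lambda_k^{-s})(\sum_l \mu_l^{-s}) = \sum_{k,l} (\lambda_k \mu_l)^{-s}$. Your extra care about branch consistency for negative eigenvalues is a reasonable addition but does not change the route.
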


In the subring of the strong ring generated by $1$-dimensional simplices, one has

\begin{coro}
An arbitrary product of $1$-dimensional simplicial complexes satisfies
the functional equation. 
\end{coro}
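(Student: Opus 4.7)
The plan is to reduce the claim to two ingredients that are already in hand: the multiplicativity $\zeta_{G \times H}(s) = \zeta_G(s)\zeta_H(s)$ established in the preceding corollary, and the one-dimensional functional equation $\zeta_{L^2}(s) = \zeta_{L^2}(-s)$ asserted as item (II) of the abstract. With both available, the corollary collapses to a short verification.

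First I would write an arbitrary element of the subring of the strong ring generated by $1$-dimensional simplicial complexes as a finite product $G = G_1 \times G_2 \times \cdots \times G_n$ in which each factor $G_i$ is one-dimensional. Iterating the multiplicativity corollary (or equivalently using the tensor factorization of $L^2$ under the strong product, so that eigenvalues multiply and $(-s)$-th powers split into a product of sums) gives
\[
\zeta_G(s) \;=\; \prod_{i=1}^n \zeta_{G_i}(s),
\]
where $\zeta$ throughout denotes the normalized spectral zeta function $\zeta_{L^2}$.

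Next I would invoke the $1$-dimensional functional equation factor by factor. Since each $G_i$ is $1$-dimensional, we have $\zeta_{G_i}(s) = \zeta_{G_i}(-s)$. Multiplying these symmetries across $i$ yields
\[
\zeta_G(s) \;=\; \prod_{i=1}^n \zeta_{G_i}(s) \;=\; \prod_{i=1}^n \zeta_{G_i}(-s) \;=\; \zeta_G(-s),
\]
which is the asserted functional equation for the product.

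There is no real obstacle at this stage: once multiplicativity under strong products and the one-dimensional symmetry are both granted, the symmetry propagates automatically through arbitrary finite products, and the argument is essentially formal. The genuine difficulty lies upstream, in establishing the $1$-dimensional case itself, whose proof is built on the generalized Cauchy-Binet identity and the palindromic character of the characteristic polynomial of $L^2$ (equivalently $\sigma(L^2) = 1/\sigma(L^2)$). That result is the substantive content; extending it to products is the easy corollary carried out above.
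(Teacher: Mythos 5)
Your argument is correct and is exactly the route the paper intends: the corollary follows immediately by combining the multiplicativity $\zeta_{G\times H}(s)=\zeta_G(s)\zeta_H(s)$ from the preceding corollary with the $1$-dimensional functional equation applied to each factor. The paper leaves this as an immediate consequence without a written proof, so your formal verification fills in precisely what was intended.
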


If we think of an element of the strong ring as a "number", then the spectral 
symmetry applies to the subring generated by $1$-dimensional "primes".
We turn to the symmetry. 

\section{The functional equation} 

\paragraph{} A finite abstract simplicial complex $G$ is $1$-dimensional, 
written ${\rm dim}(G)=1$, if its clique number is $2$. 
This means that all $x \in G$ have cardinality $|x| \leq 2$ but that there
exists at least one simplex $x$ with $|x|=2$. 
Any such complex is the $1$-skeleton complex of a finite simple graph.
(Most graph theory books view graphs as $1$-dimensional simplicial complexes. 
We like to use graphs also in higher dimensional cases as the Whitney complex is a natural 
complex. ) The next theorem therefore can be seen as a result in old-fashioned graph 
theory which looks at graphs as $1$-dimensional simplicial complexes. It will be proven in the 
next section:

\begin{thm}[Functional equation]
For a $1$-dimensional complex, we have $\zeta_{L^2}(-s)=\zeta_{L^2}(s)$. 
\end{thm}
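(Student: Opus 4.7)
The plan is to orthogonally conjugate $L$ into a block-diagonal normal form whose non-trivial pieces are $2 \times 2$ matrices of determinant $-1$, from which the symmetry of $\sigma(L^{2})$ under $\mu \mapsto 1/\mu$ is immediate.

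Let $G$ have $v$ vertices and $e$ edges, ordered so that vertices precede edges. Then
\[
L \;=\; \begin{pmatrix} I_v & B \\ B^{T} & C \end{pmatrix},
\]
where $B$ is the $v \times e$ vertex--edge incidence matrix and $C(x,y)=1$ exactly when the edges $x,y$ intersect. Counting common vertices gives $(B^{T}B)(x,y) = |x \cap y|$, which equals $2$ on the diagonal and coincides with $C(x,y)$ off the diagonal, yielding the crucial structural identity $C = B^{T}B - I_e$.

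Take the singular value decomposition $B = U \Sigma V^{T}$ with $U \in O(v)$, $V \in O(e)$, and singular values $\sigma_1,\dots,\sigma_{\min(v,e)}$, and conjugate $L$ by the orthogonal matrix $U \oplus V$. This produces the similar matrix
\[
\tilde L \;=\; \begin{pmatrix} I_v & \Sigma \\ \Sigma^{T} & \Sigma^{T}\Sigma - I_e \end{pmatrix},
\]
which, after a permutation pairing the $i$-th vertex coordinate with the $i$-th edge coordinate for $i \leq \min(v,e)$, decomposes as a direct sum of the $2 \times 2$ blocks
$B_i = \begin{pmatrix} 1 & \sigma_i \\ \sigma_i & \sigma_i^{2}-1 \end{pmatrix}$
together with $|v-e|$ residual $1 \times 1$ blocks whose diagonal entries are $+1$ (unused rows of $I_v$ when $v>e$) or $-1$ (unused diagonal entries of $\Sigma^{T}\Sigma - I_e$ when $e>v$).

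Each $B_i$ satisfies $\det B_i = (\sigma_i^{2}-1) - \sigma_i^{2} = -1$, so its two eigenvalues $\lambda_i^{\pm}$ have product $-1$ and $(\lambda_i^{+})^{2}(\lambda_i^{-})^{2} = 1$: the two eigenvalues of $B_i^{2}$ are reciprocal. The residual $1 \times 1$ blocks contribute eigenvalue $\pm 1$ to $L$, hence the self-reciprocal eigenvalue $1$ to $L^{2}$. Therefore $\sigma(L^{2})$ is closed under $\mu \mapsto 1/\mu$ with multiplicity, and
\[
\zeta_{L^{2}}(-s) \;=\; \sum_{\mu \in \sigma(L^{2})} \mu^{s} \;=\; \sum_{\mu \in \sigma(L^{2})} (1/\mu)^{-s} \;=\; \zeta_{L^{2}}(s).
\]
The only delicate point is the joint handling of $v \neq e$ and of possibly zero singular values $\sigma_i = 0$ (where $B_i$ degenerates to $\mathrm{diag}(1,-1)$, still having squared spectrum $\{1,1\}$); both are absorbed uniformly into the residual $1 \times 1$ contribution, and as a byproduct one recovers $\det L = (-1)^{\min(v,e)}(\pm 1)^{|v-e|} = (-1)^{e}$, consistent with the unimodularity theorem cited earlier.
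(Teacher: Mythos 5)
Your proof is correct, and it takes a genuinely different route from the paper. The paper reduces the functional equation to palindromicity of the characteristic polynomial of $L^2$ via the generalized Cauchy--Binet identity $p_k(L^2)=\sum_{|P|=k}\det(L_P)^2$, and then runs an induction on the number of edges using the deformation $K(t)$ and the ``artillery proposition'' $\delta_k(t)=C_k t^2(1-t^2)$ --- a combinatorial, path-integral style argument. You instead exploit the structural identity $C=B^TB-I_e$ (which is where one-dimensionality enters: distinct edges meet in at most one vertex, so the off-diagonal of $B^TB$ is exactly the edge-intersection indicator, while the diagonal is $2$), and an SVD conjugation that splits $L$ into $2\times 2$ blocks of determinant $-1$ plus $\pm 1$ scalars; the reciprocal symmetry of $\sigma(L^2)$ is then immediate. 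Your approach buys more: it gives the eigenvalues of $L$ explicitly as solutions of $\lambda-1/\lambda=\sigma_i^2$, where $\sigma_i^2$ runs over the spectrum of the signless Laplacian $BB^T$, which simultaneously generalizes the paper's circular-graph lemma ($\lambda-1/\lambda=\mu_k$) to arbitrary $1$-dimensional complexes, makes the relation $L-L^{-1}\sim\mathrm{diag}(\sigma_i^2)$ (the ``Hydrogen operator'' identity alluded to in the paper) transparent, and recovers $\det L=(-1)^e$ as a byproduct. What the paper's heavier machinery buys in exchange is a framework (CW-attachment plus deformation of minors) that the author intends to push to settings where no such clean block normal form exists, e.g.\ deformed higher-dimensional operators. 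Your handling of the edge cases ($v\neq e$, $\sigma_i=0$, isolated vertices) is correct as stated.
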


\paragraph{}
It follows from the definition of the zeta function $\zeta(s) = \sum_k \lambda_k^{-s}$ 
that this statement is equivalent to: 

\begin{coro}[Spectral symmetry]
The spectrum of the square $L^2$ satisfies $\sigma(L^2)=\sigma(L^{-2})$. 
\end{coro}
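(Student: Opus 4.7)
The plan is to view the corollary as an elementary equivalence: both directions come from matching a finite exponential sum to the multiset of logarithms of eigenvalues. By the unimodularity theorem $|\det(L)|=1$ invoked in the previous section, $L$ is invertible, hence $L^2$ is positive definite with positive eigenvalues $\mu_1,\dots,\mu_n$, and the zeta function is the finite Dirichlet sum
\[
\zeta_{L^2}(s) \;=\; \sum_{k=1}^n \mu_k^{-s} \;=\; \sum_{k=1}^n e^{-s\log\mu_k},
\]
an entire function of $s$. In particular $\zeta_{L^2}(-s)=\sum_k \mu_k^{s}=\sum_k e^{s\log\mu_k}$.

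For the direction \emph{spectral symmetry $\Rightarrow$ functional equation}, I would simply note that $\sigma(L^{-2})=\{\mu_k^{-1}\}$ (as a multiset), so the assumption $\sigma(L^2)=\sigma(L^{-2})$ produces a bijection $\pi$ of $\{1,\dots,n\}$ with $\mu_{\pi(k)}=\mu_k^{-1}$, giving $\sum_k \mu_k^{s}=\sum_k \mu_{\pi(k)}^{-s}=\sum_k \mu_k^{-s}$, that is, $\zeta_{L^2}(-s)=\zeta_{L^2}(s)$ for every $s$.

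The converse is where the small subtlety lies, and would be my main step. Group the eigenvalues by distinct values $\nu_1<\nu_2<\cdots<\nu_m$ with multiplicities $c_1,\dots,c_m$, and set $a_j=\log\nu_j$. The identity $\zeta_{L^2}(s)=\zeta_{L^2}(-s)$ becomes the equality of finite exponential sums
\[
\sum_{j=1}^m c_j\, e^{-s a_j} \;=\; \sum_{j=1}^m c_j\, e^{\,s a_j}
\]
for all $s\in\CC$. Since the exponentials $s\mapsto e^{s\alpha}$ for distinct real $\alpha$ are linearly independent as entire functions (which can be seen by matching asymptotics as $s\to+\infty$, by Dirichlet-series uniqueness, or by the injectivity of the two-sided Laplace transform applied to the atomic measure $\sum_j c_j\delta_{a_j}-\sum_j c_j\delta_{-a_j}$), the multiset $\{a_j\text{ with multiplicity }c_j\}$ must equal $\{-a_j\text{ with multiplicity }c_j\}$. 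Exponentiating gives the multiset equality $\{\mu_k\}=\{\mu_k^{-1}\}$, i.e., $\sigma(L^2)=\sigma(L^{-2})$.

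The argument is routine once linear independence is invoked; there is no serious obstacle, and the corollary really is a reformulation rather than a theorem in its own right. The substantive content lies in the functional equation itself, whose proof (via the generalized Cauchy–Binet formula mentioned in the abstract) is what the next section will address.
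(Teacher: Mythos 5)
Your argument is correct and follows the same route as the paper, which simply observes that the corollary is equivalent to the functional equation by the definition $\zeta_{L^2}(s)=\sum_k \mu_k^{-s}$; the paper leaves the converse direction implicit, and your linear-independence-of-exponentials step is exactly the detail needed to make that equivalence rigorous. Nothing is missing, and your closing remark is accurate: the substance lies in the functional equation itself, proved later via the deformation $K(t)$ and the generalized Cauchy--Binet formula.
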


\paragraph{}
Using the sign notation of (\ref{characteristicpolynomial}),
these two statements are again equivalent to 

\begin{coro}[Palindromic characteristic polynomial]
If $p_k$ are the coefficients of the characteristic polynomial of $L^2$ belonging to 
a $1$-dimensional complex, then $p_{n-k} = p_k$.  
\end{coro}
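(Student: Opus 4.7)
The plan is to deduce the palindromic statement from the spectral-symmetry corollary immediately preceding it, combined with the unimodularity $\det(L)^{2}=1$ from the earlier section. Writing $p(x)=\det(xI+L^{2})=\sum_{k=0}^{n} p_{k} x^{k}$, each coefficient $p_{k}$ equals the elementary symmetric polynomial $e_{n-k}(\mu_{1},\dots,\mu_{n})$ in the eigenvalues of $L^{2}$. Unimodularity gives $\prod_{j}\mu_{j}=\det(L^{2})=1$, and the elementary identity $e_{k}(1/\mu_{1},\dots,1/\mu_{n})=e_{n-k}(\mu)/\prod_{j}\mu_{j}$ then reduces the claim $p_{k}=p_{n-k}$ to the multiset identity $\{\mu_{j}\}=\{1/\mu_{j}\}$. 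So what actually needs proving is the spectral symmetry, and that is what I would attack directly.

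My approach is structural. Order the simplices so the $p$ vertices come before the $q$ edges. The definition $L(x,y)=[x\cap y\neq\emptyset]$, together with the fact that two distinct edges of a simple graph meet in at most one vertex, gives the clean block form
\[
L \;=\; \begin{pmatrix} I_{p} & B \\ B^{T} & B^{T}B - I_{q} \end{pmatrix},
\]
where $B$ is the vertex-edge incidence matrix. The identity $E = B^{T}B - I_{q}$ for the edge-edge block is the unique spot in the argument where $\dim G=1$ is used. I would then take a singular value decomposition $B=U\Sigma V^{T}$ and conjugate $L$ by the orthogonal matrix $\operatorname{diag}(U,V)$; a further permutation pairing each vertex mode with its matching edge mode block-diagonalizes $L$ into $2\times 2$ blocks
\[
M_{\sigma} \;=\; \begin{pmatrix} 1 & \sigma \\ \sigma & \sigma^{2}-1 \end{pmatrix},
\]
one for each singular value $\sigma$ of $B$, together with residual $1\times 1$ blocks equal to $+1$ on $\ker B^{T}$ and $-1$ on $\ker B$.

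The punch line is a two-line $2\times 2$ computation. One checks $\det(M_{\sigma}) = (\sigma^{2}-1)-\sigma^{2} = -1$, so the eigenvalues $\lambda_{\pm}(\sigma)$ of $M_{\sigma}$ satisfy $\lambda_{+}\lambda_{-} = -1$, i.e.\ $\lambda_{-} = -1/\lambda_{+}$. Squaring converts each such pair into a reciprocal pair $\lambda_{+}^{2},\,1/\lambda_{+}^{2}$ in $\sigma(L^{2})$; the residual $\pm 1$ blocks contribute $1\in\sigma(L^{2})$, which is self-reciprocal. Summed across all singular values this is exactly $\sigma(L^{2}) = 1/\sigma(L^{2})$, closing the loop. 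The step I expect to demand the most care is the bookkeeping when $p\neq q$ and for zero singular values (where $M_{0}$ degenerates to $\operatorname{diag}(1,-1)$); lining up the extra vertex or edge directions with $\ker B^{T}$ and $\ker B$ is the only place where the asymmetry between vertices and edges surfaces, and it is easy to produce an off-by-one error there.
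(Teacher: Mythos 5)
Your proof is correct, but it takes a genuinely different route from the paper. The paper works coefficient by coefficient: it expresses $p_k(L^2)=\sum_{|P|=k}\det(L_P)^2$ via the generalized Cauchy--Binet formula, then runs an induction on the number of edges using the deformation $K(t)$ and the ``artillery proposition'' $\delta_k(t)=C_k t^2(1-t^2)$, so that the palindrome differences all return to zero at $t=1$. You instead prove the equivalent spectral symmetry $\sigma(L^2)=1/\sigma(L^2)$ directly from the block structure
$L=\left(\begin{smallmatrix} I_p & B \\ B^{T} & B^{T}B-I_q\end{smallmatrix}\right)$,
which is valid precisely because in a $1$-dimensional complex the only simplices are vertices and edges and two distinct edges meet in at most one vertex (so the edge--edge block of $L$ agrees entry-by-entry with $B^TB-I_q$); the SVD then reduces everything to the $2\times2$ blocks $M_\sigma$ with $\det M_\sigma=-1$, forcing $\lambda_+\lambda_-=-1$ and hence reciprocal pairs in $\sigma(L^2)$, with the $\ker B^T$ and $\ker B$ directions contributing self-reciprocal eigenvalues $1$. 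The bookkeeping you flag ($2\min(p,q)+|p-q|=p+q=n$, zero singular values giving $\mathrm{diag}(1,-1)$) checks out. Your argument is shorter and buys more: it gives the eigenvalues of $L$ explicitly in terms of the singular values of the incidence matrix, yields $\lambda-1/\lambda=\sigma^2$ (which is exactly the Hydrogen relation $H=L-L^{-1}$ the paper alludes to, with $BB^T$ the signless Laplacian), recovers the circular-graph lemma as a special case, and makes transparent that the $\pm1$ eigenvalues live on $\ker B^T$ and $\ker B$, matching the paper's cohomological remarks. What the paper's combinatorial deformation argument buys instead is a path-integral interpretation of the coefficients and a precise diagnosis of how the palindromic property fails coefficient-by-coefficient once higher-dimensional cells are attached, which is the direction the author pursues later.
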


\paragraph{}
The statements can be rephrased also that the 
union of eigenvalues of the multiplicative group generated by $G$ in the 
ring of simplicial complexes produce a finitely generated Abelian group. 
We can look at the rank of this group hoping to get a combinatorial invariant.

\paragraph{}
As a comparison, we know that the Dirac operator $D=d+d^*$ defined by incidence matrices $d$ 
has a spectrum with an additive symmetry $\sigma(D) = -\sigma(D)$. 
But this is hardly a good analogy because the additive symmetry
for $D$ is true for all simplicial complexes, while the functional equation symmetry we 
are currently look at only holds in the $1$-dimensional case. 
Because both the connection matrix $L$ as well as the Dirac operator $D$ have 
both positive and negative spectrum, we like
to compare $L$ and $D$ and see $L^2$ as an analog of the Hodge operator $H=D^2$. 
There are more and more indications which support this point of view like the
exciting formula $H=L-L^{-1}$ (which holds for a suitable basis) we will cover elsewhere. 
We called the operator $L-L^{-1}$ the Hydrogen operator \cite{DehnSommerville}. 

\paragraph{}
As before, we proceed inductively by building up the complex as a CW-complex. First start with a zero-
dimensional complex which is built by cells attached to $-1$-dimensional spheres, the empty complexes.
As the dimension of $G$ to $1$, we now only need to understand what happens if we add a 
$1$-dimensional edge. As mentioned above, the spectral symmetry is equivalent
to a palindromic characteristic polynomial in which we use the notation
\begin{equation}
\label{characteristicpolynomial}
  p(x) = {\rm det}(A-x I) = p_0(-x)^n + \cdots +p_k(-x)^{n-k} + \dots + p_n \; , 
\end{equation}
so that $p_0=1, p_1={\rm tr}(A)$ and $p_n={\rm det}(A)$ and where the largest 
non-zero element $p_k$ is the pseudo determinant of $A$. The validity of the functional 
equation is equivalent to establishing the palindromic symmetry $p_k=p_{n-k}$ for $A=L^2$. 

\begin{figure}
\scalebox{0.04}{\includegraphics{figures/tetrahedron.pdf}}
\scalebox{0.04}{\includegraphics{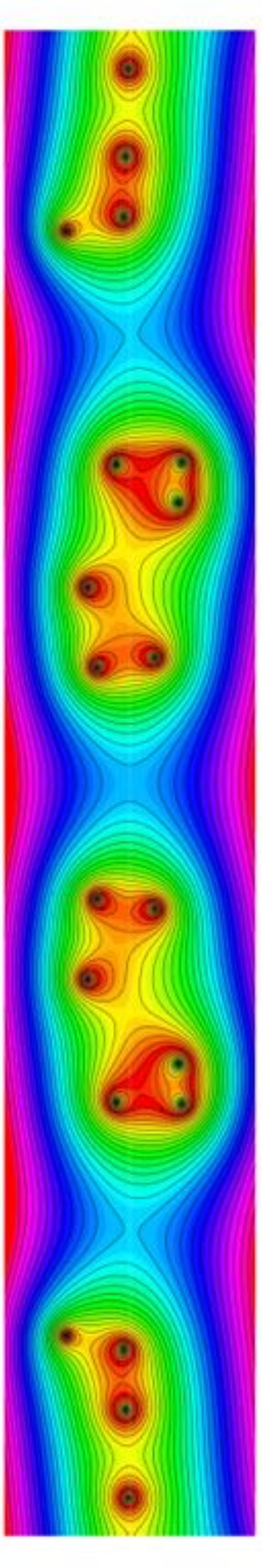}}
\scalebox{0.04}{\includegraphics{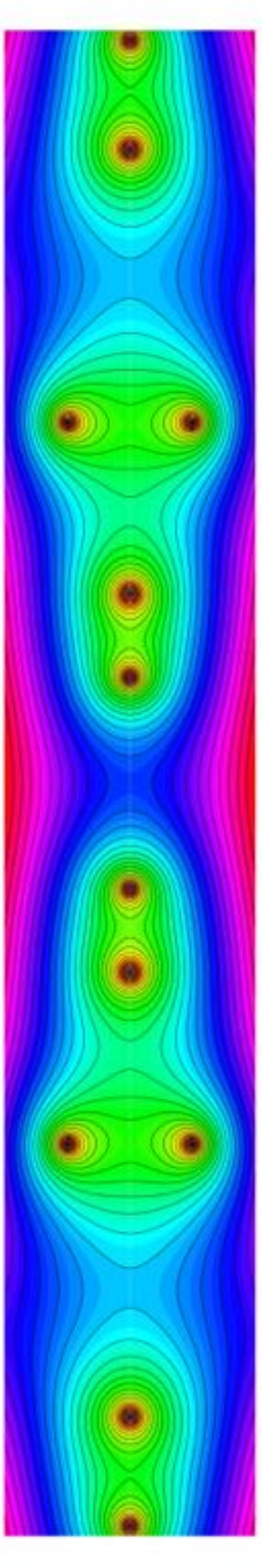}}
\scalebox{0.04}{\includegraphics{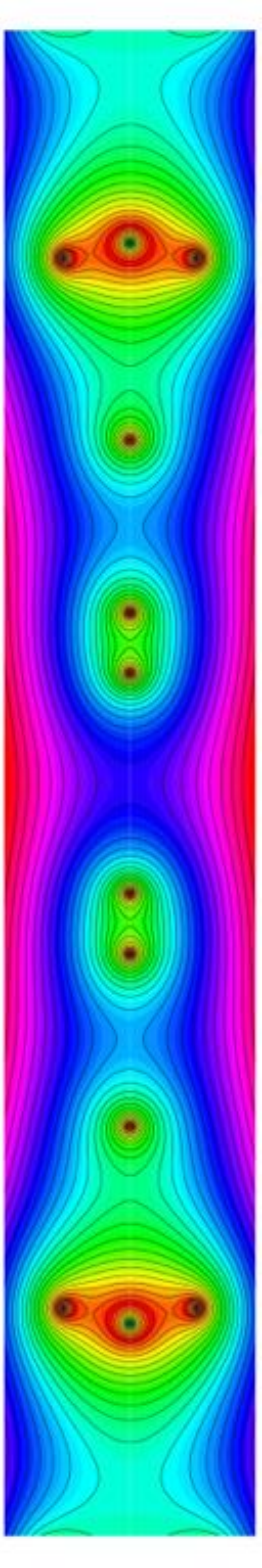}}
\scalebox{0.04}{\includegraphics{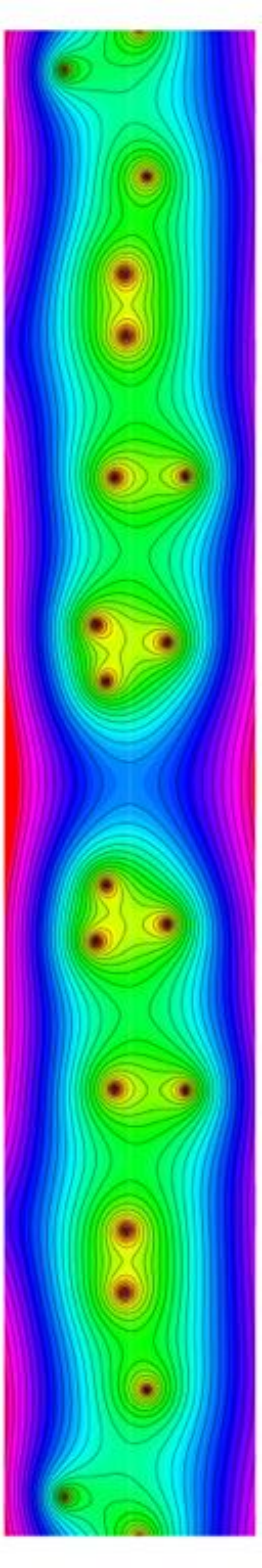}}
\caption{
The zeta function of the 5 platonic solids Tetrahedron, Octahedron, Cube, Dodecahedron and Icosahedron
(equipped with the Whitney complex). The cube and Dodecahedron are both $1$-dimensional 
(as the graphs have no triangles) and so satisfy the spectral symmetry. 
}
\end{figure}

\paragraph{}
Palindromic polynomials are also called self-reciprocal. They are of some
importance in other parts of mathematics. Here are some connections:
they play a role in coding theory \cite{JoynerShaska}, symplectic
geometry or knot theory. A symplectic polynomial is the characteristic polynomial 
of a symplectic matrix. A result of Seifert assures that a polynomial is the 
Alexander polynomial of some knot if and only if it is monic and reciprocal and 
$p(1)= \pm 1$ \cite{ChrisSmyth2015}. Even degree monic integer polynomials are
exactly the symplectic polynomials \cite{Margalit}. Since $(1+x)$
always is a factor if the degree is odd,
we can always write $p(x) = x^m q(x+1/x)$ or $p(x) = (1+x) x^m q(x+1/x)$ for a
polynomial $q$ of degree $m$. This illustrates a general
fact that one can compute the roots of palindromic polynomials up to degree 9 with
explicit formulas \cite{Lindstrom}. Also every minimal polynomial of a Salem number is
palindromic \cite{ChrisSmyth2015}.

\paragraph{}
The complex 
$G=\{\{1\},\{2\},\{3\},\{4\},\{1,2\},\{1,4\},\{2,3\},\{3,4\}\}$
belongs to the circular graph $C_4$. 
The characteristic polynomial of $L^2$ is 
$p(x)=x^8-32 x^7+316 x^6-1248 x^5+1926 x^4-1248 x^3+316
   x^2-32 x+1$. The quartic polynomial 
$q(x) = x^4-32 x^3+312 x^2-1152 x+1296$ has the property that 
$p(x) = x^4 q(x+1/x)$. This illustrates that one can compute the
roots of a palindromic polynomials up to degree 9. \cite{Lindstrom}


\paragraph{}
An amusing thing happens for cyclic polynomials $p$ of $G_n=C_{2^n}$:
the polynomial of $G_{n+1}=C_{2^{(n+1)}}$ has the polynomial $p_n$ 
of $C_{2^n}$ as a factor. Also $q_{n+1}/q_n$ is a square of an irreducible
polynomial of degree $2^n$. This means that the eigenvectors and eigenvalues
of $G_n$ have an incarnation as eigenvalues and eigenvectors in 
the Barycentric refinement $G_{n+1}$. But this happens only for circular
graphs, not for general $1$-dimensional complexes. 


\begin{figure}
\scalebox{0.32}{\includegraphics{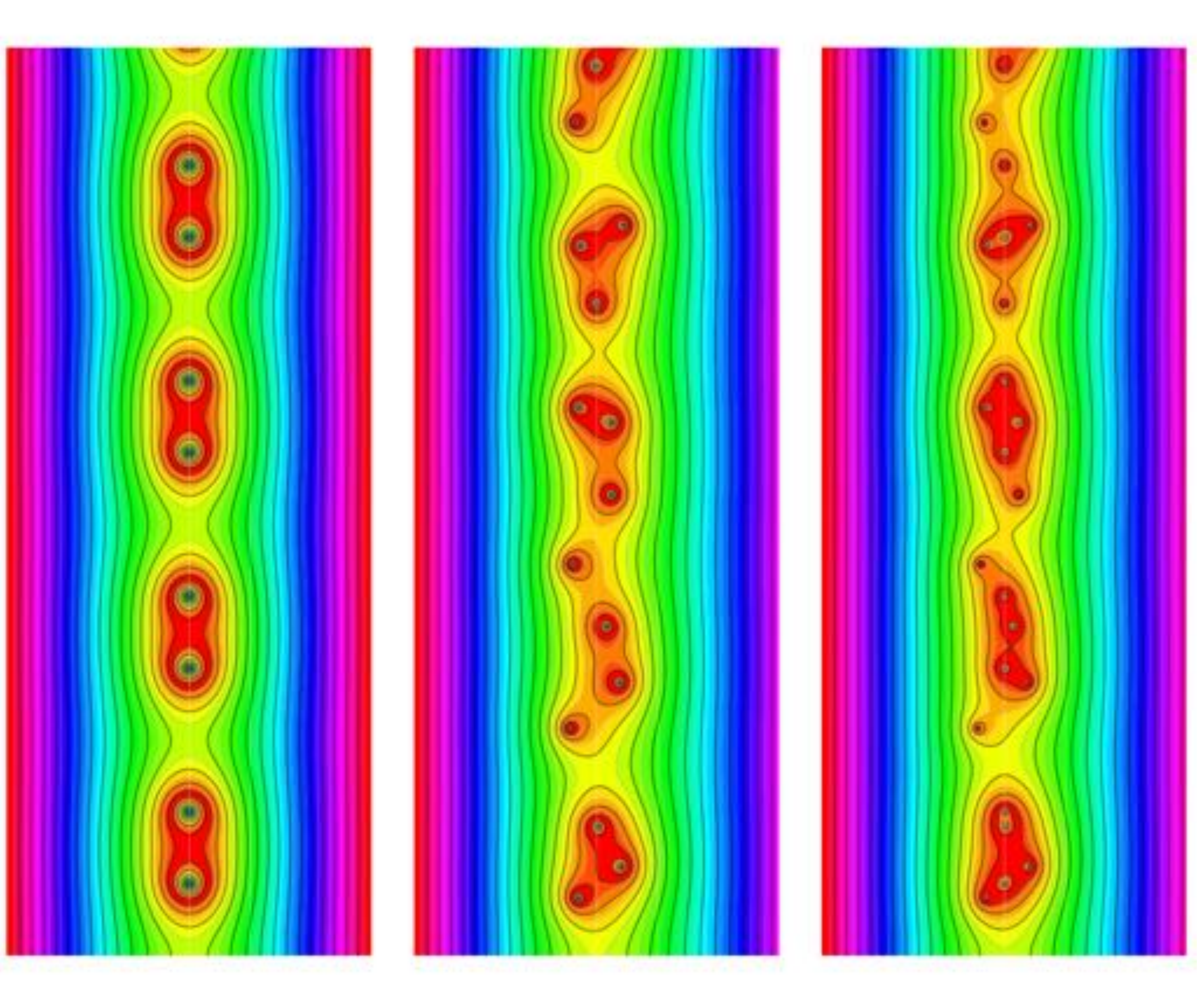}}
\caption{
The figure shows the contour plots of the zeta functions of the complete complexes $G=K_2$ and $H=K_3$ 
as well as of the product $G \times H$. The product is a $3$-dimensional $CW$ complex. 
The roots of the $\zeta_{G \times H}$ is exactly the union of the roots
of $\zeta_G$ and $\zeta_H$. 
}
\end{figure}

\paragraph{}
It would be nice to have other classes where one can get limiting
statements. One class to consider are the complete graphs $K_n$ which 
are $n-1$-dimensional complexes with $|G|=2^{n}-1$ simplices. 
We observe that $L(K_n)$ has only $2n-1$ different eigenvalues. This
could explain the periodic clumping along curves of the roots of 
the corresponding zeta functions. We also observe experimentally
that the coefficients of the characteristic polynomial of $L^2(K_n)$ 
is asymptotically nice and smooth. The case of the $1$-dimensional 
skeleton of $K_n$ is illustrated at the end of the article. 

\begin{figure}
\scalebox{0.2}{\includegraphics{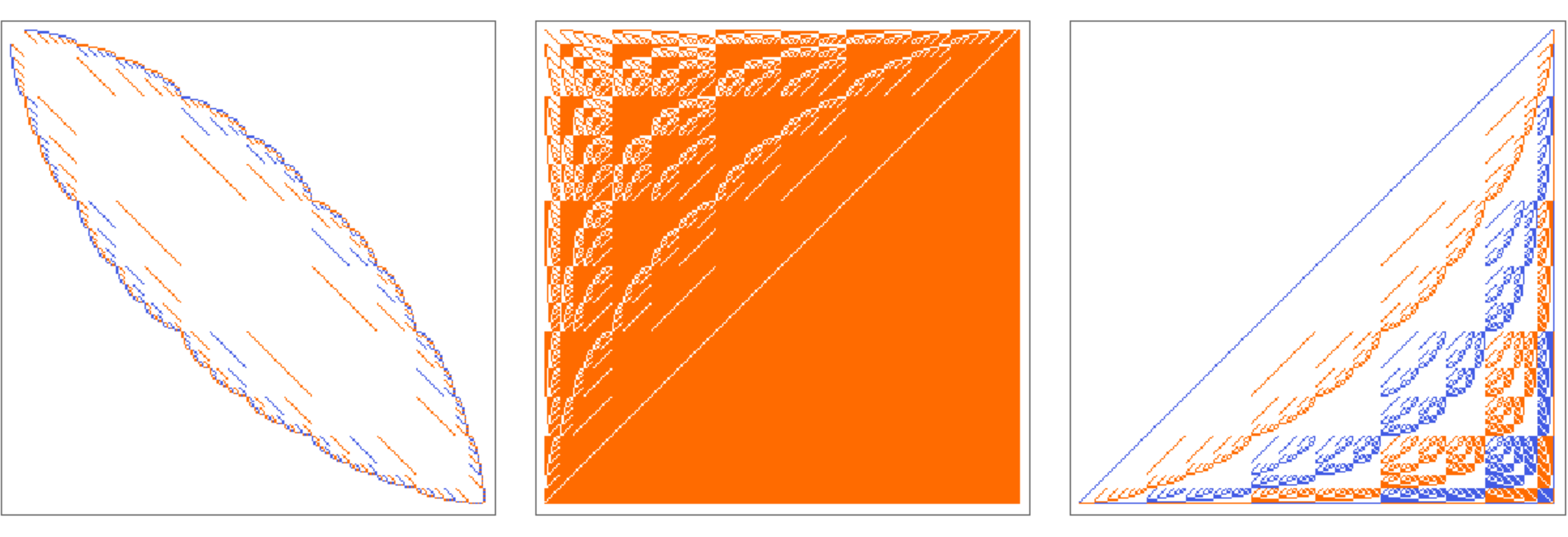}}
\scalebox{0.08}{\includegraphics{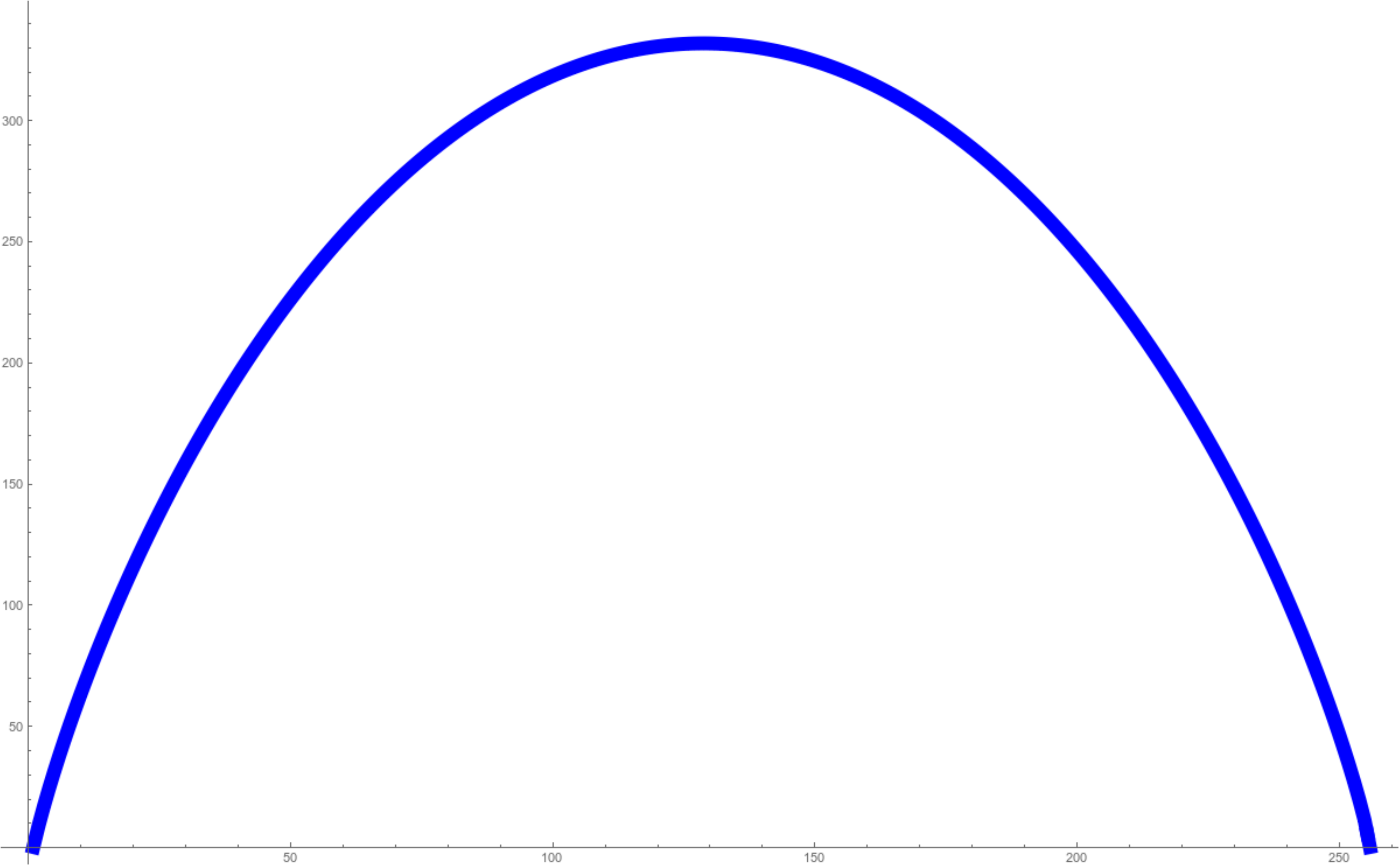}}
\caption{
The Dirac matrix $D$, the connection Laplacian $L$ and its inverse $L^{-1}$
for $G=K_8$ are $(2^8-1) \times (2^8-1)$ matrices. 
The last graph shows the coefficient list $k \to \log|p_k(L^2(K_8))|$ of the 
characteristic polynomial of $L^2$. 
}
\end{figure}

\section{Circular graphs}

\paragraph{} 
Like in the Hodge case, also for connection operators everything is explicit if the 
complex is a circular graph. 
The eigenvalues $\lambda_j$ of the connection Laplacian $L$ are obtained from the
eigenvalues $\mu_j$ of the Hodge Laplacian $H$ by a multiplicative
symmetrization $\lambda_j-1/\lambda_j = \mu_j$: 

\begin{lemma}
The connection Laplacian $L$ of a circular graph $C_n$ is a $2n \times 2n$
matrix $L$ which has the eigenvalues $\lambda_k = f^{\pm}(\mu_k)$, where
$\mu_k(k/n)$ with $\mu(x)=f^{\pm}(4 \cos^2(\pi x))$ and  
$f^{\pm}(x) = (x \pm \sqrt{x^2+4})/2$.
\end{lemma}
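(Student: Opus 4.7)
The plan is to diagonalize $L$ by exploiting the cyclic $\ZZ_n$-symmetry of $C_n$. Label the $2n$ simplices as vertices $v_0,\dots,v_{n-1}$ and edges $e_m=\{v_m,v_{m+1}\}$ (indices mod $n$). The shift $v_m\mapsto v_{m+1}$, $e_m\mapsto e_{m+1}$ preserves the intersection pattern defining $L$, so $L$ commutes with the corresponding $2n\times 2n$ permutation matrix, and $\CC^{2n}$ decomposes into the joint eigenspaces $V_k$ for $k=0,\dots,n-1$; each $V_k$ is two-dimensional, spanned by one vertex Fourier mode and one edge Fourier mode at frequency $k$.

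Next I would restrict $L$ to $V_k$ by plugging the ansatz $\phi(v_m)=a\,\omega^{mk}$, $\phi(e_m)=b\,\omega^{mk}$ with $\omega=e^{2\pi i/n}$ into the explicit identities
\begin{align*}
(L\phi)(v_m) &= \phi(v_m)+\phi(e_{m-1})+\phi(e_m),\\
(L\phi)(e_m) &= \phi(e_m)+\phi(v_m)+\phi(v_{m+1})+\phi(e_{m-1})+\phi(e_{m+1}),
\end{align*}
which just enumerate the simplices meeting $v_m$ and $e_m$ in $C_n$. A direct simplification produces the $2\times 2$ Hermitian block
\[ M_k \;=\; \begin{pmatrix} 1 & 1+\omega^{-k}\\ 1+\omega^k & 1+\omega^k+\omega^{-k}\end{pmatrix}, \]
whose trace equals $2+2\cos(2\pi k/n)=4\cos^2(\pi k/n)$ and whose determinant equals $-1$, the latter a consistency check with the unimodularity theorem $|\det L|=1$.

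Setting $\mu_k=4\cos^2(\pi k/n)$, the two eigenvalues of $M_k$ are the roots of $\lambda^2-\mu_k\lambda-1=0$, equivalently of $\lambda-\lambda^{-1}=\mu_k$, and hence equal $f^{\pm}(\mu_k)$. Letting $k$ run over $0,\dots,n-1$ accounts for all $2n$ eigenvalues of $L$. To identify $\mu_k$ as a Hodge Laplacian eigenvalue, I would perform the analogous Fourier diagonalization of $H$ on $C_n$ and match the resulting $2\times 2$ blocks, which is already recorded in \cite{KnillZeta}. The only point requiring genuine care is the bookkeeping of the off-diagonal entries $1+\omega^{\pm k}$: because the two edges incident to each vertex carry Fourier factors that differ by a twist of $\omega^{\pm k}$, the pairing produces $1+\omega^{\pm k}$ rather than a real constant, and this is precisely what converts a constant trace into the $4\cos^2(\pi k/n)$ needed on the right-hand side. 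Everything beyond this is a routine two-by-two diagonalization.
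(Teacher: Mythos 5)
Your proof is correct, and the computations check out: the intersection pattern of $C_n$ gives exactly the two stencils you write, the Fourier ansatz yields the block $M_k$ with trace $2+2\cos(2\pi k/n)=4\cos^2(\pi k/n)$ and determinant $-1$, and the characteristic equation $\lambda^2-\mu_k\lambda-1=0$ is the same as $\lambda-\lambda^{-1}=\mu_k$, so the eigenvalues are $f^{\pm}(\mu_k)$. However, your route differs from the paper's. The paper does not block-diagonalize $L$ itself; it instead invokes the identity $M=L-L^{-1}$ and observes that, when the $2n$ simplices are interleaved as $\{1\},\{1,2\},\{2\},\dots,\{n\},\{n,1\}$, this $M$ equals $2I+Q+Q^*$ with $Q$ the shift by $2$ on $\ZZ_{2n}$; Fourier theory then gives $\sigma(M)=\{4\cos^2(\pi k/n)\}$ (each with multiplicity two), and the eigenvalues of $L$ are recovered from $\lambda-\lambda^{-1}=\mu$. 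That argument is shorter and makes the conceptual point the surrounding text cares about (that $L-L^{-1}$ carries the Hodge spectrum), but it presupposes the structure of $L^{-1}$ and, strictly speaking, only shows that each $\lambda_j-\lambda_j^{-1}$ lies in $\{4\cos^2(\pi k/n)\}$, leaving slightly implicit why both branches $f^{+}(\mu_k)$ and $f^{-}(\mu_k)$ actually occur. Your version is self-contained (no appeal to $L^{-1}$ or to the hydrogen identity), and your computation $\det M_k=-1$ is exactly what forces one eigenvalue of each sign in every block, so both branches are realized; that closes the gap the paper glosses over. What you lose is the direct link to the Hodge Laplacian, which you correctly note must then be supplied by a separate matching of Fourier blocks for $H$.
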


\begin{proof}
The matrix $M=L-L^{-1}$ has the eigenvalues
$\mu_k = \lambda_k - \lambda_k^{-1}$. If the simplices of the complex
are ordered as $G=\{ \{1\},\{1,2\},\{2\}, \dots, \{n-1,n\},\{n\},\{n,1\} \}$,
then $M=2+Q+Q^*$, where $Q u(n)=u(n+2)$ is shift by $2$.
Fourier theory gives the eigenvalues $2+2\cos(2\pi k/n) = 4 \cos^2(\pi k/n)$.
\end{proof} 

\paragraph{} As in the Hodge case, the eigenvalues of $M$ are 
$\{ 4 \sin^2(\pi k/n) \}_{k=1}^n$ but appear with algebraic multiplicity $2$ at 
least each. In the Hodge case, the limiting density of 
states is the equilibrium measure on the real line segment $I=[0,4]$ in the complex plane. We can 
see the density of states of the connection Laplacian as the pull back of
the map $T(z) = z-1/z$. It is the equilibrium measure on the union of two intervals
$J = [1,2+\sqrt{5}]$ and $[-1,-2+\sqrt{5}]$. The map $T$ maps the intervals $J$ onto 
the single interval $I$. 


\paragraph{}
The list of eigenvalues of $L$ is the same when taking $4 \sin^2(\pi k/n)$ rather
than $4 \cos^2(\pi k/n)$. This is more convenient as it relates directly to the Hodge case: 
where the eigenvalues were $\lambda_k = g(k/n)$ with $g(x) = 4 \sin^2(\pi x)$.
We see that the map $T(x)=x+1/x$ maps the density of states measure on $\sigma(L)$ 
to the density of states measure of $\sigma(H)$. 


\section{Pythagoras} 

\paragraph{}
Given two arbitrary $m \times n$ matrices $F,G$, the coefficient $p_k$ of the 
characteristic polynomial of the $n \times n$ matrix $F^T G$ satisfies the identity
$$  p_k = \sum_P {\rm det}(F_P G_P) \; , $$
where $F_P$ is a sub-matrix of $F$ in which only rows from $I \subset \{1,\cdots, n\}$
and columns from $J \subset \{1,\cdots, n\}$ are taken. 
This generalized Cauchy-Binet result was first found and proven in
\cite{CauchyBinetKnill} and is the first formula for the 
characteristic polynomial of a product of arbitrary two matrices. The classical
Cauchy-Binet theorem is the special case for $p_k$ when $k={\rm min}(m,n)$, where
${\rm det}(F^T G) = \sum_{|P|=k} {\rm det}(F_P) {\rm det}(G_P)$.
In the even more special case $k=n=m$, it is the familiar product formula for determinants.
A consequence of the generalized Cauchy-Binet result \cite{CauchyBinetKnill} is:

\begin{lemma}[Generalized Pythagoras]
Given a matrix $L$, then 
$$  p_k(L^T L) = \sum_{|P|=k} {\rm det}((L^T)_P) {\rm det}(L_P) \; . $$
\end{lemma}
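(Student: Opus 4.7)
The plan is to recognize the identity as the diagonal specialization $F = G = L$ of the generalized Cauchy-Binet formula for the coefficients of the characteristic polynomial of $F^T G$, quoted immediately above the lemma. The statement reduces to that formula with the two factors made equal, combined with the trivial observation that transposing a $k \times k$ block leaves its determinant unchanged.

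\paragraph{}
Concretely, I would carry out three short steps. First, expand the $k$-th coefficient as a sum over $k \times k$ principal minors:
$$ p_k(L^T L) = \sum_{|S|=k} \det\bigl((L^T L)_{S,S}\bigr), $$
where $S$ ranges over $k$-subsets of columns and $(L^T L)_{S,S}$ is the principal submatrix indexed by $S$. Second, for fixed $S$ factor the minor as $(L^T L)_{S,S} = M_S^T M_S$, with $M_S$ the submatrix of $L$ obtained by keeping only the columns in $S$, and apply the classical Cauchy-Binet theorem to this product of a $k \times m$ and an $m \times k$ matrix; this yields
$$ \det\bigl((L^T L)_{S,S}\bigr) = \sum_{|T|=k} \det(L_{T,S})^2, $$
with $T$ ranging over $k$-subsets of rows. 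Third, reorganize the double sum as a single sum over pairs $P = (T, S)$ of $k$-subsets and rewrite $\det(L_{T,S})^2$ as $\det((L^T)_P) \det(L_P)$ by transposing one of the two factors.

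\paragraph{}
The only point requiring real care is the indexing convention: one must confirm that in the lemma's notation $(L^T)_P$ refers to rows and columns of $L^T$ in a way compatible with $L_P$, so that $\det((L^T)_P) = \det(L_P)$. This is the one piece of bookkeeping where a careless writer could slip, but it is not a substantive obstacle — it is just the statement that transposition preserves the determinant of a square matrix. Once the conventions are pinned down, the three steps above assemble into the desired identity with no further work, since the generalized Cauchy-Binet result of \cite{CauchyBinetKnill} has already done the heavy lifting of handling the simultaneous sum over row and column subsets.
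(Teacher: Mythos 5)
Your proof is correct, but it takes a genuinely different route from the paper. The paper gives no argument at all: it simply quotes the generalized Cauchy--Binet formula $p_k(F^T G)=\sum_{|P|=k}\det(F_P)\det(G_P)$ from \cite{CauchyBinetKnill} and records the lemma as its specialization $F=G=L$, so the ``heavy lifting'' is entirely outsourced to that reference. You announce the same specialization in your opening sentence, but what you then actually write is an independent, self-contained derivation that never uses the two-matrix theorem: you expand $p_k(L^TL)$ as the sum of $k\times k$ principal minors, factor each principal minor as a Gram determinant $\det(M_S^T M_S)$, apply only the \emph{classical} Cauchy--Binet theorem to that product, and then reassemble the double sum over pairs $P=(T,S)$. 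This buys a proof of the needed diagonal case from elementary ingredients, at the cost of not yielding the general $F^TG$ statement; the paper's citation buys brevity and the stronger general formula. Both are sound. On the one point you flag: your reading $\det((L^T)_P)=\det(L_P)$, giving $\sum_P\det(L_P)^2$, is the operationally correct one --- it is exactly the form $p_k=\sum_{|P|=k}\det(L_P)^2$ the paper uses later in the proof of the functional equation --- even though the paper's remark that $(L^T)_P\neq(L_P)^T$ makes the lemma's literal notation look more delicate than it is (and in the application $L$ is symmetric anyway, so the distinction evaporates).
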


\paragraph{}
Note that $(L^T)_P$ is not the same than $(L_P)^T$. 
The formula for the coefficients $p_k$ implies for a self-adjoint matrix $L$, 
that the coefficients $p_k$ of the characteristic polynomial of $L^2$ satisfies
some Pythagorean identities. It implies also the formula $p_k(A) = {\rm tr}(\Lambda^k A)$ 
\cite{SimonTrace}. In the even more special case $k=1$, where $p_1(A)$ is the trace, 
one has then the Hilbert-Schmidt identity 
${\rm tr}(L^T L) = \sum L_{ij}^2$ is a version of Pythagoras
justifying the name ``generalized Pythagoras". 

\paragraph{} 
In \cite{HearingEulerCharacteristic} we have looked at the deformation
$$ \tilde{K}(t) = \left[ \begin{array}{ccccccc}
    L_{11} & L_{12} & . & . & . & L_{1n} & t L_{1,x}\\
    L_{21} & L_{22} & . & .  & . & . & t L_{2,x} \\
     . & . & . & . & . & . & t L_{3,x} \\
     . & . & . & . & . & . & .   \\
     . & . & . & . & . & . & .   \\
     . & . & . & . & . & . & .   \\
    L_{n1} & . & . & . & . & L_{nn} & t L_{n,x} \\
   1 & 1 & 1 & 1 & 1 & 1 & 1 \\
\end{array} \right] . $$ 
We look here at the more symmetric deformation
$$ K(t) = \left[ \begin{array}{ccccccc}
    L_{11} & L_{12} & . & . & . & L_{1n} & t L_{1,x}\\
    L_{21} & L_{22} & . & .  & . & . & t L_{2,x} \\
     . & . & . & . & . & . & t L_{3,x} \\
     . & . & . & . & . & . & .   \\
     . & . & . & . & . & . & .   \\
     . & . & . & . & . & . & .   \\
    L_{n1} & . & . & . & . & L_{nn} & t L_{n,x} \\
    t L_{x1} & t L_{x2} & \dots & \dots & \dots & t L_{xn}& 1  \\
\end{array} \right] . $$
It interpolates $L=K(0)$, the connection Laplacian of the complex $G$ with 
$K(1)$, the connection Laplacian of the complex $G \cup_{A} \{x\}$ in which a
new cell $x$ is attached to $G$ along $A$. As for $\tilde{K}(t)$ we have
${\rm det}(K(t)) = {\rm det}(K) (1-2t^2)$ which is consistent with the fact that adding
an odd-dimensional simplex changes the sign of the determinant of the connection
matrix $L$, a fact which is true for all simplicial complexes. 

\begin{lemma}[Coefficients of characteristic polynomial]
The coefficients $q_k(t)$ of the characteristic polynomial 
of $K_{ij}(\sqrt{t})$ are all linear $t$. The coefficients $p_k(t)$ of the 
characteristic polynomial of $K_{ij}(t)^2$ are of maximal degree $4$ in $t$. 
\end{lemma}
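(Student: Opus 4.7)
The plan is to exploit the fact that $K(t)$ depends on $t$ only in its last row and last column, that this dependence is linear, and that the corner entry $K_{n+1,n+1}=1$ is independent of $t$. In particular, the two bordering strips (off the corner) are pure multiples of $t$ with \emph{no} constant term. All degree bounds in the lemma will follow from this structural observation together with a very short case analysis.

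For the first statement, I would use $q_k = \mathrm{tr}(\Lambda^k K(\sqrt{t}))$, i.e.\ $q_k(t)$ is (up to sign) the sum of all principal $k\times k$ minors of $K(\sqrt t)$. Split the index sets $S\subseteq\{1,\dots,n+1\}$ of size $k$ according to whether $n+1\in S$ or not. For $n+1\notin S$ the principal minor is just a principal minor of $L$ and contributes a constant. For $n+1\in S$, the submatrix has bordered form
\[
K(\sqrt t)[S,S]=\begin{pmatrix} A & \sqrt t\,v\\ \sqrt t\,v^{T} & 1\end{pmatrix},
\]
with $A$ a principal submatrix of $L$ and $v$ the restriction of the bordering vector. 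The Schur-complement / matrix-determinant-lemma identity gives
\[
\det K(\sqrt t)[S,S]=\det(A)-t\, v^{T}\mathrm{adj}(A)\, v,
\]
which is linear (affine) in $t$. Summing over $S$ of a fixed size yields $q_k(t)=a_k+b_k t$.

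For the second statement, I would invoke the generalized Pythagoras identity of the previous section: since $K(t)$ is symmetric,
\[
p_k\bigl(K(t)^{2}\bigr)=\sum_{|I|=|J|=k}\det K(t)[I,J]^{2}.
\]
It then suffices to show that every $k\times k$ submatrix $K(t)[I,J]$ has determinant of degree at most $2$ in $t$. The four cases according to whether $n+1\in I$ and $n+1\in J$ give: neither, degree $0$ (the submatrix is a submatrix of $L$); exactly one of the two, degree at most $1$, because the whole last row or column of the submatrix consists of multiples of $t$ and a factor of $t$ comes out; both, the bordered form $\begin{pmatrix} A & tv\\ tw^{T} & 1\end{pmatrix}$ whose determinant is $\det(A)-t^{2}w^{T}\mathrm{adj}(A)v$, of degree $2$. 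Squaring gives degree at most $4$, and the bound survives the sum over $(I,J)$.

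The one genuinely delicate point, and the main place bookkeeping could go wrong, is the ``exactly one'' case in part two: we need to observe that when $n+1\in I$ but $n+1\notin J$ (or vice versa) the corner $1$ is \emph{not} part of the submatrix, so the entire bordering row (resp.\ column) is $t$ times a constant vector, producing the overall factor of $t$ and degree exactly $1$ (not $2$). Once this is checked, no cancellation or leading-coefficient analysis is required — we only want upper bounds on the degrees, so the Schur-complement identity and a clean case split finish both halves.
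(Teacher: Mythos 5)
Your proposal is correct and follows essentially the same route as the paper: the generalized Pythagoras (Cauchy--Binet) identity reduces everything to bounding the $t$-degree of the minors $\det K(t)[I,J]$, and your case split on whether the border index lies in $I$ and/or $J$ is exactly the ``Laplace expansion with respect to the last column'' argument the paper invokes (the paper also offers an equivalent path-counting justification). Your Schur-complement formulation just makes that expansion explicit, and it additionally shows that only even powers of $t$ survive, which is what the subsequent artillery proposition needs.
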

\begin{proof}
This follows from the Pythagoras formula and the fact that every 
minor ${\rm det}(K_P(t))$ is linear or quadratic in $t$. The reason for the later
is that every $1$-dimensional path the directed sub-graph defined by $P$ 
can enter or leave $x$ only once. It also can be seen directly when doing the
Laplace expansion of the determinant with respect to the last column. 
Therefore, each term ${\rm det}(K_P(t))^2$ is a mostly
quartic function in $t$ and only coefficients $c_1 + c_2 t^2 + c_4 t^4$ appear. 
\end{proof} 

Now there are lots of such functions starting with the value $0$ at $t=0$,
it appears like a miracle that all these functions have again a root at $t=1$. But
this is exactly what happens in the $1$-dimensional case.
This will be explained in the next section. 


\section{Proof of the functional equation}

\paragraph{}
In order to show the spectral symmetry it suffices to prove the
palindromic relation
$$  p_k = \sum_{|P|=k} {\rm det}(L_P)^2 = \sum_{|P|=n-k} {\rm det}(L_P)^2  = p_{n-k} \; . $$

Every pattern $P=I \times J$ of size $k$ corresponds to a directed $1$-dimensional closed 
directed sub-graph of $G$. The formula for $p_k$ is then a ``path integral" over all 
directed paths of length $k$. 
Using the complementary pattern $\overline{P}=\overline{I} \times \overline{J}$ satisfying 
$I \cup \overline{I}=J \cup \overline{J}=\{1, \dots, n\}$, and
$I \cap \overline{I}=J \cap \overline{J}=\emptyset$, we can write the claim also as
$$  \delta_k(t) = \sum_{|P|=k} {\rm det}(L_P)^2 - {\rm det}(L_{\overline{P}})^2 \; . $$

What happens if we deform the operator $L$ to an augmented one using the 
operator $K(t)$? While the palindrome difference deformation function
$$  \delta_k(t) =  p_k(K(t))-p_{n-k}(K(t)) $$ 
is not constant zero for $t \in [0,1]$, we can show that $\delta_k(t)$ is
zero for $t=1$ again, proving thus the palindromic property.

\begin{propo}[Artillery Proposition] 
If $G$ is a $1$-dimensional simplicial complex and a new edge $e=(a,b)$
is added to $G$, leading to the deformation operator $K(t)$, then each 
palindrome difference function satisfies 
$$    \delta_k(t) = C_k  t^2 (1-t^2) \; , $$
where $C_k$ is an integer.
\end{propo}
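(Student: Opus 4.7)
The plan is to exploit the restrictive polynomial shape that the preceding lemma gives. Each minor ${\rm det}(K_P(t))$ takes one of three forms according to whether the new index $n+1$ lies in neither, exactly one, or both of the row/column index sets $I,J$: it is a constant, a purely linear monomial $ct$, or of the shape $a+bt^2$. Squaring produces only powers $0$, $2$, $4$, so by the Pythagoras formula $p_k(K(t)^2)=\sum_{|P|=k}{\rm det}(K_P(t))^2$ we obtain
\[ \delta_k(t)=A_k+B_k t^2+D_k t^4. \]
It then suffices to prove $A_k=0$ and $B_k+D_k=0$; together with the polynomial shape this gives the claimed factorization $C_k t^2(1-t^2)$.

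For the constant term I would compute directly that $K(0)=L(G)\oplus[1]$, so the characteristic polynomial of $K(0)^2$ is obtained from that of $L(G)^2$ by an extra linear factor; expanding yields the simple coefficient recursion $q_j=p_j+p_{j-1}$ relating the coefficients of $K(0)^2$ to those of $L(G)^2$ in the paper's sign convention. Under the induction hypothesis that the smaller $1$-dimensional complex $G$ already satisfies the palindrome $p_j=p_{n-j}$ (the previous step of the CW build-up), this recursion transfers palindromy: $q_j=q_{n+1-j}$, and hence $\delta_k(0)=A_k=0$.

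The crux is to establish $B_k+D_k=\delta_k(1)=0$, which is precisely the palindromic identity for $G\cup\{e\}$ that one is trying to prove in the first place. I would attack it by reinterpreting the Pythagorean sum combinatorially. Each pattern $P=I\times J$ of size $k$ corresponds to a directed sub-graph of the simplex-intersection graph of $G\cup\{e\}$, and ${\rm det}(L_P)$ is a signed count of matchings of simplex-rows to simplex-columns via the Leibniz formula. Because the complex is $1$-dimensional, these matchings only involve incidences of three elementary local types (vertex--vertex, vertex--edge, edge--edge), so the minors are combinatorially rigid. The aim is to construct a value-preserving involution pairing size-$k$ patterns with size-$(n+1-k)$ patterns; the natural first candidate is complementation $P\mapsto\overline{P}=\overline{I}\times\overline{J}$.

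The main anticipated obstacle is precisely verifying this pairing. Complementation does not preserve $|{\rm det}(L_P)|$ for an arbitrary unimodular selfadjoint matrix, so the argument must genuinely exploit $1$-dimensionality, most plausibly through the block decomposition $L=\begin{pmatrix}I_{|V|}&B\\B^{T}&M\end{pmatrix}$, where $B$ is the vertex--edge incidence matrix and $M$ the edge--edge intersection matrix. A Schur-complement calculation should then express size-$k$ pattern minors in terms of size-$(n+1-k)$ ones and match them off term by term. Should the direct bijection fail, a fallback route is to establish the spectral involution $\lambda\leftrightarrow 1/\lambda$ for $L$ via the ``Hydrogen operator'' identity $H=L-L^{-1}$ foreshadowed later in the paper, which would yield palindromy as an immediate consequence at the spectral level.
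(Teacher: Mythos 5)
Your reduction of the statement to two identities is sound and matches the paper's strategy: the classification of minors $\det(K_P(t))$ into constant, $ct$, and $a+bt^2$ types is correct, the Pythagoras formula then forces $\delta_k(t)=A_k+B_kt^2+D_kt^4$, and your computation of the constant term via $K(0)=L\oplus[1]$ and the recursion $q_j=p_j+p_{j-1}$ is a clean (indeed slightly cleaner than the paper's) verification that $A_k=0$ under the inductive palindromy of $G$. The gap is exactly where you flag it: you never prove $B_k+D_k=0$, and none of the three routes you sketch closes it. Complementation $P\mapsto\overline{P}$ is, via Jacobi's identity and unimodularity, essentially a restatement of the palindromic property rather than a mechanism for proving it, so it cannot serve as the ``value-preserving involution''; the Schur-complement idea is not developed; and the Hydrogen identity $H=L-L^{-1}$ is explicitly deferred by the paper to later work and, even granted, would still require showing that both branches $\lambda_{\pm}=(\mu\pm\sqrt{\mu^2+4})/2$ occur with equal multiplicity, which is again the statement to be proved.

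What the paper actually does for this step is an induction on the \emph{number of edges} of $G$, not a direct pairing. Assuming $G$ contains a second edge $f\neq e$ (otherwise one is in the zero-dimensional base case, where $K(0)$ is the identity and the binomial coefficients are palindromic), the sum $\sum_{|P|=k}\det(L_P)^2-\det(L_{\overline{P}})^2$ is split into three parts according to whether $f$ lies in both of $I,J$, in neither, or in exactly one. The first two parts are handled by the induction hypothesis applied to the complex with $f$ removed, where the $t^2$--$t^4$ symmetry is already known; the two mixed parts $U$ (with $f\in I$, $f\notin J$) and $V$ (with $f\notin I$, $f\in J$) cancel pairwise, $U+V=0$, by the symmetry of $L$ under $I\leftrightarrow J$. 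This second-edge splitting is the missing idea, and it is also precisely where one-dimensionality enters (removing an edge of a $1$-complex leaves a simplicial complex, which fails once $2$-cells are present). As it stands your proposal establishes the shape $A_k+B_kt^2+D_kt^4$ and $A_k=0$, but not the factorization $C_kt^2(1-t^2)$.
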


\paragraph{}
The connection Laplacian is $L=I+A$, where $I$ is the identity matrix and $A$ is the
adjacency matrix of the connection graph $G'$ of $G$, the graph which has the simplices of $G$
as vertices and where two vertices $x,y \in G$ are connected if they intersect. 
As used in \cite{Unimodularity}, the determinant of $L$, the Fredholm determinant of $A$
can be interpreted as a signed sum over all possible $1$-dimensional oriented closed subgraphs 
(unions of closed oriented circular paths), where fixed points are counted as closed paths of length $1$  
and closed loops $a \to b \to a$ along an edge $e=(a,b)$ are counted as closed 
paths of length $2$. The sign of each path is according to whether it has even or odd length. 
This is in accordance to the Leibniz definition of determinants; every oriented closed path
corresponds to permutation in the determinant and the decomposition into different connectivity 
components exhibit the cycle structure of the permutation. 
Figures~(14)-(17) in \cite{Unimodularity} illustrate this geometric interpretation
for concrete graphs, where all oriented closed subgraphs are drawn.

\paragraph{}
The coefficient $p_k$ of the characteristic polynomial of $L^2$ also has a path integral interpretation.
It depends on the pattern $P=I \times J$ however, whether paths in it can be closed or not. If $I=J$, then 
every connected component in the path is closed. If $I \cap J =\emptyset$, then no closed path components 
appear. The formula $p_k(L^2)=\sum_{|P|=k} {\rm det}(L_P)^2$ shows that we have to look 
at all paths of length $k$ now. The length of a connected path component $\{ v_1,v_2, \dots, v_m\}$ is is $m$
if $v_1 \neq v_m$ and $k-1$ if $v_m=v_1$, a case where the path is closed. 
A vertex $\{ v \}$ alone is considered a path of length $1$ and a path $\{v_1,v_2,v_1 \}$ is a path of 
length $2$. The reason for these assumptions is it that the connection matrix $L$ has $1$ in the
diagonal so that, if it is interpreted as an adjacency matrix, we are allowed to loop at a simplex, but
then that path of length $1$ is isolated. 
Paths contributing to a minor in $P=I \times J$ with $|P|=k$ are given by a collection of $k$ tuples
$\{v_1, \dots, v_k\}$ with $k$ different elements $v_j$ where transitions
$v_i \in I \to v_{i+1} \in J$ or $v_k  \in I\to v_1 \in J$ are allowed. We can think of $I$ as the exit
set and $J$ the entry set. This allows especially transitions $v_i \to v_i$ if $i \in I \cap J$. 

\paragraph{}
There are three type of patterns $P$: \\

Class A) $P=I \times J$ satisfies $e \notin I, e \notin J$. It represents paths of length $k$
which never enter, nor leave $e$. \\

$\overline{A}$) Patterns $\overline{P}$, where $P$ is in A).  \\

Class B) $P=I \times J$ has the property that exactly one of the subset $I,J$ contains $e$. 
These paths add to the $t^2$ contributions in the polynomial $p_k(t)$. \\

$\overline{B}$) Patterns $\overline{P}$ where $P$ is in $B)$.  \\

Class C) $P= I \times J$ has the property that $e \in I, e \in J$. This produces the $t^4$
contributions in $p_k(t)$. \\

$\overline{C}$) Patterns $\overline{P}$, where $P$ is in C). 

\paragraph{}
Now we can use induction with respect to the number of edges. There are two things to show:
the constant part is zero and the $t^2$ and $t^4$ contributions have opposite sign. 
The induction assumption is the case when $G$ is zero dimensional with no edge. 
But then, $K$ is the identity matrix and the coefficients of the characteristic polynomial 
are Binomial coefficients which are palindromic. The induction foundation is established. \\

{\bf (i)} All the constant parts go away: the paths not hitting the edge $e$ in 
class A) and class $\overline{A}$ cancel. We have to show that the contribution of patters
$|P|=k$ which $e \notin P$ is equal to the contribution of $|\overline{P}|=k$
never hiting $e$. We can formulate this as $A - \overline{A} = 0$.  \\

{\bf Proof of i):} If $P=I \times J$ has $e \notin I, e \notin J$, we can take $e$ 
away and look at the complex $G$ before adding $e$, where we know the statement 
already to hold. 

\paragraph{}
{\bf (ii)} To see the symmetry between $t^2$ and $t^4$ we have to show that 
the contribution of paths of length which either leave or only enter $e$ minus the 
contribution of paths of length $n-k$ which either only leave or only enter $e$
agrees with the difference of the number of paths of length $k$ which leave and enter $e$,
minus the contribution of paths of length $n-k$ which leave and enter $e$.
We can formulate this as $B-\overline{B} = C - \overline{C}$.  \\

{\bf Proof of ii):} There exists an other edge $f$ in $G$ different from $e$. Otherwise, we are
in the induction foundation case. We split now the sum
$$ \sum_{|P|=k} {\rm det}(L_P)^2 - {\rm det}(L_{\overline{P}})^2  $$
into three  parts. Look first at all patterns $P=I \times J$, where $f \in I, f \in J$. 
For those patterns the $t^2$ and $t^4$ contributions are the same by induction: 
it is the sum when the edge $f$ has been taken away. 
Now look at all the patterns $P=I \times J$, where $f \notin I, f \notin J$. 
But this is the same sum as before just with a negative sign, and it is therefore again symmetric in 
$t^2$ and $t^4$. Finally, look at 
$$  U = \sum_{|I \times J|=k, f \in I, f \notin J} {\rm det}(L_P)^2 - {\rm det}(L_{\overline{P}})^2 $$
and 
$$  V = \sum_{|I \times J|=k, f \notin I, f \in J} {\rm det}(L_P)^2 - {\rm det}(L_{\overline{P}})^2 $$
These two add up to zero $U+V=0$ because of symmetry.  \\

It follows that $p_k(L^2) = 0 + c t^2 - c t^4$ for some 
constant $c$, which is the claim of the artillery proposition.  \\

\paragraph{}
{\bf Remarks:} \\
{\bf a)} Where did the assumption enter that
$G$ is $1$-dimensional? It was in the induction part, 
where the one dimensionality of the complex mattered. 
When we add the first two-dimensional cell and omit one of the edges, then 
we don't deal with a simplicial complex any more. \\
{\bf b)} We can look in general at the deformation of the Green
function entries $g_t(x,y) = K(t)^{-1}(x,y)$. Using the Cramer's formula, 
one can see that $g_t(x,y) (1-2t^2)$ is a quadratic polynomial in $t$ for
all $x,y$. Since the Euler characteristic changes by $-1$ as we add one single edge
also the total energy $\sum_{x,y} g_t(x,y)$ changes by $-1$. In the one
dimensional case, we have complete control about the Green functions and
Green function changes. This will also allow us to see that $L-L^{-1}$ is the 
Hodge Laplacian. 

\begin{figure}
\scalebox{0.7}{\includegraphics{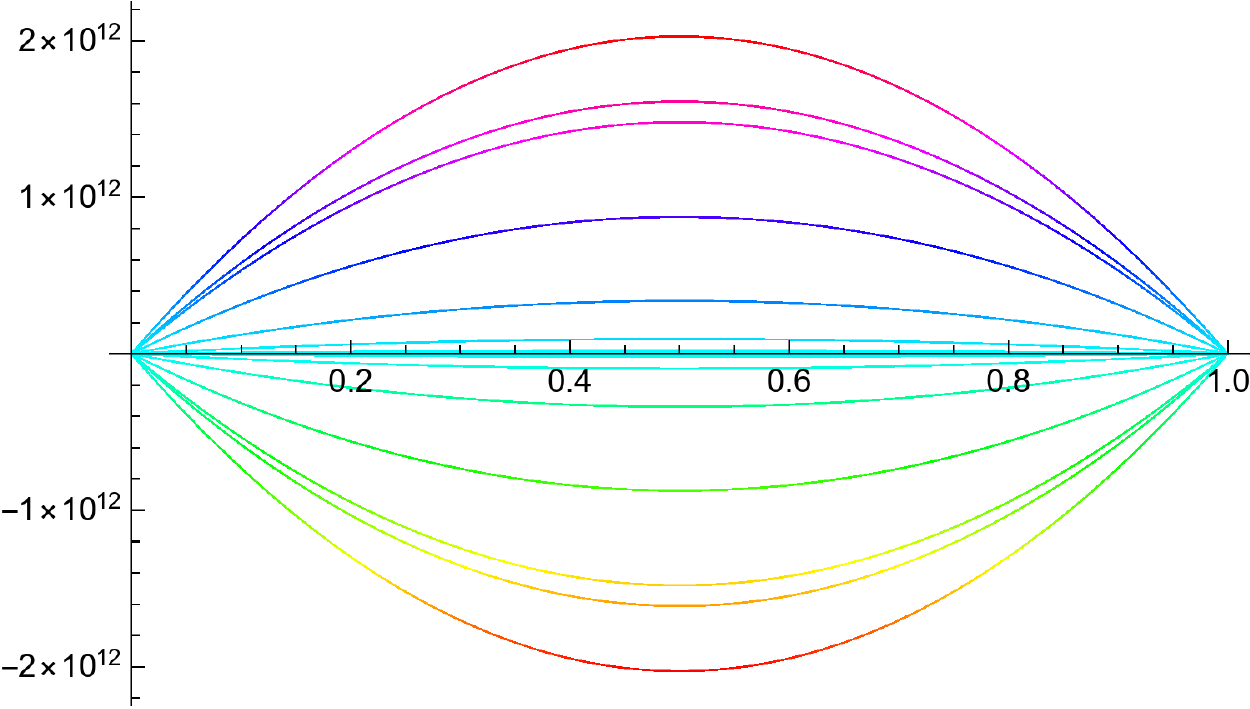}}
\caption{
We see the palindromic difference $\delta_k(t)=p_k(K(\sqrt{t}))-p_{n-k}(K(\sqrt{t}))$ 
of the coefficients of the characteristic polynomial of the deformation 
$K(t)$, when adding an other edge.
The artillery proposition assures that all trajectories hit the ground 
simultaneously at $t=1$. When taking time $\sqrt{t}$ we see parabolas. 
}
\end{figure}

\paragraph{}
While all $\delta_k(\sqrt{t})$ are quadratic functions and the induction 
assumption assures $\delta_k(0)=0$ for all $k$, we have for a general
complex (dropping the assumption that it is necessarily $1$-dimensional), 
that the second root of $\delta_k(t)$ depends on $k$. 
For $1$-dimensional complexes however, $\delta_k(1)=0$ 
for all $k$ assuring that also the enlarged complex keeps the 
palindromic property. For general complexes, the choreography falls out
of sync. We need to deform the operator $L$ to synchronize them again. 

\paragraph{}
{\bf Example:} take the complex $G=\{\{1\},\{2\},\{3\},\{1,2\}\}$ which is not connected
and has a single vertex not connected to a $K_2$ component. We have 
$L=\left[ \begin{array}{cccc} 1 & 0 & 0 & 1 \\ 
                              0 & 1 & 0 & 1 \\
                              0 & 0 & 1 & 0 \\ 
                              1 & 1 & 0 & 1 \\ \end{array} \right]$. Now, we add
the new edge connecting the single vertex $3$ to the rest. This gives the deformation
$K(t) = \left[ \begin{array}{ccccc}
 1 & 0 & 0 & 1 & 0 \\
 0 & 1 & 0 & 1 & t \\
 0 & 0 & 1 & 0 & t \\
 1 & 1 & 0 & 1 & t \\
 0 & t & t & t & 1 \\
\end{array} \right]$. Now, $K(1)$ is the connection Laplacian of 
$H=\{\{1\},\{2\},\{3\},\{1,2\},\{2,3\}\}$. The list of coefficients of the
characteristic polynomial of $K(0)$ is the palindrome $(1,9, 22,22, 9, 1)$ 
while the list of coefficients of the characteristic polynomial of $K(1)$
is the palindrome $(1, 15, 49, 49, 15, 1)$.  

\paragraph{}
To illustrate the computation of coefficients $p_k$, let us take the simpler example of the
transition $G=\{\{1\},\{2\}\}$ to $G_e$= $\{\{1\},\{2\},\{1,2\}\}$, where 
$K(t) = \left[ \begin{array}{ccc} 1 & 0 & t \\ 0 & 1 & t \\ t & t & 1 \\ \end{array} \right]$. 
We have $p_{K(t)}(x) = 1 (-x)^3 + (3+4t^2) x^2 + (3+4t^2) (-x) + (1-2t^2)^2$
so that the coefficients deform as $p_0(t)=1, p_1(t)=p_2(t)=3+4t^2, p_3(t)=(1-2t^2)^2$. 
Now, as $n=3$ we have $\delta_0(t)=p_0(t)-p_n(t)=4t^2(1-t^2), 
\delta_1(t)=p_1(t)-p_2(t)=4t^2(1-t^2)$. This illustrates the artillery proposition even so
$G$ was $0$-dimensional. The patterns with $|P|=1$ are the $1 \times 1$ submatrices
$$ P \in \{ [1], [0], [t], [0], [1], [t], [t], [t], [t] \} \; . $$
The sum of the squares of the determinants is $3+4t^2$. 
The patters with $|P|=2$ are the $(2 \times 2)$-submatrices
$$
             \{   \begin{array}{ccc}
                   \left[ \begin{array}{cc}1&0 \\ 0&1 \end{array} \right], &
                   \left[ \begin{array}{cc}1&t \\ 0&t \end{array} \right], &
                   \left[ \begin{array}{cc}0&t \\ 1&t \end{array} \right], \\
                   \left[ \begin{array}{cc}1&0 \\ t&t \end{array} \right], &
                   \left[ \begin{array}{cc}1&t \\ t&1 \end{array} \right], &
                   \left[ \begin{array}{cc}0&t \\ t&1 \end{array} \right], \\
                   \left[ \begin{array}{cc}0&1 \\ t&t \end{array} \right], &
                   \left[ \begin{array}{cc}0&t \\ t&1 \end{array} \right], &
                   \left[ \begin{array}{cc}1&t \\ t&1 \end{array} \right] \} \; 
                  \end{array}
$$
of $K(t)$. It leads to the minors $1,t,-t,t,1-t^2,-t^2,-t,-t^2,1-t^2$ whose sum of the squares
is $3+4t^4$. Finally, there is one matrix $K_P=K$ if $|P|=3$ having the minor 
${\rm det}(K)=1-2t^2$ leading to $p_3(t)$. 


\section{Convergence}

\paragraph{}
Given a $1$-dimensional simplicial complex $G=G_0$, we can look at 
its $n$'th Barycentric refinement $G_n$. Let $\zeta_n(s)$ denote the normalized connection
zeta function of $G_n$. That is $\zeta_n(s) = \sum_k \lambda_k^{-s}$, where $\lambda_k$
are the eigenvalues of $L^2(G_n)$. Let $\mu(G_n)$ denote the zero locus measure of
the zeta function $\zeta_n$. It is a measure summing up the Dirac masses of the 
roots of $\zeta_n$. We say that $\mu_n$ converges weakly to the imaginary axes,
if for every compact set $K$ away from the imaginary axes, we have $\mu_n(K) =0$ for 
large enough $n$. In our case, the convergence is nicer as we have a definite entire limiting
function $z(s)$. 

\begin{thm}[The limiting roots] 
For every $1$-dimensional complex $G$, the roots of $\zeta_{G_n}$ converge weakly 
to the roots of an explicit limiting function $z(s)$.
The functions $\zeta_{G_n}/|G_n|$ converge to the entire function $z(s)$ uniformly on 
compact subsets. 
\end{thm}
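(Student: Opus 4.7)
The plan is to reduce the theorem to a statement about convergence of empirical spectral measures and then apply a standard holomorphic-limit argument. Writing $\mu_n$ for the empirical spectral measure of $L^2(G_n)$ normalized by $|G_n|$, we have the integral representation
\begin{equation*}
\frac{\zeta_{G_n}(s)}{|G_n|} \;=\; \int \lambda^{-s}\, d\mu_n(\lambda),
\end{equation*}
so the task splits into two pieces: first, show that $\mu_n$ converges weakly to an explicit measure $\mu$ supported in a compact subset of $(0,\infty)$; second, upgrade this to uniform convergence of the normalized zeta functions on compact sets of $\mathbb{C}$, and extract root convergence via Hurwitz.

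For the first step I would invoke the universality result for Barycentric limits of $1$-dimensional complexes \cite{KnillBarycentric,KnillBarycentric2}: the density of states under repeated Barycentric refinement is independent of the starting $G$ and equals the limiting density of states for circular graphs. The circular-graph lemma already identifies this measure explicitly: the eigenvalues of $L(C_n)$ are $f^{\pm}(4\sin^2(\pi k/n))$, so the limiting spectral measure for $L$ is the pushforward of Lebesgue measure on $[0,1]$ under $x \mapsto f^{\pm}(4\sin^2(\pi x))$, supported on $[1, 2+\sqrt{5}]\cup[-1,-2+\sqrt{5}]$. Squaring and pushing forward gives $\mu$ for $L^2$, supported in an interval $[a^2,b^2]\subset(0,\infty)$. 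The crucial feature is the uniform \emph{mass gap} $a > 0$, i.e.\ a uniform lower bound on $|\lambda|$ independent of $n$; this is precisely the ``limiting Laplacian has a mass gap'' statement flagged in the introduction and is ultimately a consequence of unimodularity together with a uniform bound on $\|L^{-1}\|$ along Barycentric refinements.

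With $\mu_n \to \mu$ weakly and all measures supported in the fixed compact set $[a^2,b^2]$, the function $\lambda \mapsto \lambda^{-s}$ is, for each $s\in\mathbb{C}$, continuous and bounded on the support, with $|\lambda^{-s}| \leq \max(a^{-2\mathrm{Re}(s)},b^{-2\mathrm{Re}(s)})$. Hence $\zeta_{G_n}(s)/|G_n| \to z(s) := \int \lambda^{-s}\, d\mu(\lambda)$ pointwise, and the family is locally uniformly bounded in $s$. Since each $\zeta_{G_n}/|G_n|$ is entire, Montel's theorem promotes the pointwise limit to locally uniform convergence, and the limit $z(s)$ is entire. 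Its explicit form, as a generalized hypergeometric function or the elliptic integral at integer $s$ promised in the introduction, is then just the direct evaluation of this integral against the explicit pushforward $\mu$ identified above.

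Finally, the weak convergence of root measures follows from Hurwitz's theorem: on any compact $K$ on which $z$ is not identically zero, the zeros of $\zeta_{G_n}/|G_n|$ in $K$ converge, with multiplicity, to those of $z$; away from the zero set of $z$ the normalized zeta functions are eventually non-vanishing, so the zero-counting measures on $K$ converge weakly. The main obstacle is verifying the uniform mass gap $a>0$ in the Barycentric tower, because without it one loses both the entirety of $z$ and the local uniform boundedness needed for Montel; once this is in place the rest is routine complex analysis combined with the universality theorem.
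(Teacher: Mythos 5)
Your proposal is correct in outline, but it routes the argument through different machinery than the paper. The paper reduces an arbitrary $1$-dimensional complex to a circular graph by finitely many cut-and-glue surgeries and controls the effect on the spectrum with the Lidskii--Last inequality $\|\mu-\lambda\|_1 \leq \sum_{i,j}|A-B|_{ij}$, concluding that $\zeta_{G_n}(s)/|G_n|-\zeta_{H_n}(s)/|H_n|\to 0$ locally uniformly; it then treats the circular case by recognizing $\zeta_{C_n}(s)/n$ as a Riemann sum for $z(s)=\int_0^1 Z(x,s)\,dx$ and arguing that where $z(s)\neq 0$ the sums are eventually nonzero. You instead package the spectral input as weak convergence of the normalized empirical spectral measures (citing the universality results, which is legitimate since those are exactly what the paper's Lidskii--Last step re-derives), and then run Montel/Vitali plus Hurwitz. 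Your route buys a cleaner and actually stronger root-convergence statement --- Hurwitz gives convergence of zeros with multiplicity in both directions, whereas the paper only argues that no spurious roots survive where $z\neq 0$ --- at the cost of treating universality as a black box; the paper's quantitative route is more self-contained and yields explicit error rates for the Riemann-sum approximation, which it later exploits numerically. One point you flag as the ``main obstacle'' deserves emphasis: the uniform lower bound on the spectrum of $L^2(G_n)$ along the tower is genuinely needed for your local uniform boundedness (and, implicitly, for the paper's claim that $\sum_j|(\lambda_j^2)^s-(\mu_j^2)^s|\leq C(s)m$ follows from the $\ell^1$ eigenvalue bound, since $x\mapsto x^s$ is only Lipschitz away from $0$ when ${\rm Re}(s)<0$); neither your sketch nor the paper supplies a full proof of this gap, so your identification of it as the load-bearing hypothesis is accurate rather than a defect relative to the paper.
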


\paragraph{}
For the analysis, it is enough to look at the circular case:
for every $1$-dimensional simplicial complex $G$ there
exists a finite integer $m(G)$ such that $m$ cutting or gluing procedures
produce a circular graph. 2) As shown in \cite{KnillBarycentric,KnillBarycentric2},
if a complex $G$ is modified to a complex $H$ by changing maximally $m$ columns and $m$ rows only,
then the eigenvalues $\lambda_j$ of $G$ and eigenvalues $\mu_j$ of $H$ satisfy 
$\sum_j |\lambda_j^2-\mu_j^2| \leq C m(G)$, where $m(G)$ depends on $G$ only and
$C$ does not depend on the choice of the Laplacian (like $C=4$ for the 
connection Laplacian of a $1$-dimensional complex). For a constant $C(s)$ such that
$|\sum_j |(\lambda_j^2)^s| - \sum_j |\mu_j^2)^s|| \leq C(s) m$. 
This follows from the Lidskii-Last inequality 
$||\mu-\lambda||_1 \leq \sum_{i,j=1}^{n} |A-B|_{ij}$ for the eigenvalues $\mu_j$ and $\lambda_j$
of two arbitrary selfadjoint matrices $A,B$ of the same size. 
This means $\zeta_{G_n}(s)/|G_n| - \zeta_{H_n}(s)/|H_n|$ goes to zero uniformly on
compact sets for $s$. 

\paragraph{}
For the circular case, one can give explicit expressions. Define the function 
$$ Z(x,s) = \left( f_+(4\sin^2(\pi x) )^2 \right)^s + \left( f_-( 4\sin^2(\pi x) )^2 \right)^s  \; , $$
where $f_{\pm}(y)$ are the solutions $x$ of the equation $x-1/x=y$. 
The function $1-1/x$ maps the two intervals $[-1,2-\sqrt{5}]$ and $[1,2+\sqrt{5}]$ to 
the interval $[0,4]$ and $f_{\pm}$ are the inverses. 
We have seen that the eigenvalues $\lambda_k^s$ of $L^2(C_n)$ 
satisfy $\{ \lambda_k^s+1/\lambda_k^s = Z(k/n,s) \}_{k=1}^n$. 
The normalized Zeta function $\zeta(s) = \sum_{k=1}^n \lambda_k^s$ of $C_n$ 
satisfies now 
$$ \zeta_{C_n}(s) = \sum_{k=1}^n Z(\frac{k}{n},s)  \; . $$
The roots of $\zeta_n(s)$ converge in the limit $n \to \infty$ to
the roots of $z(s)$. Indeed, $\zeta_n(s)/n \to z(s)$. 


\paragraph{}
We see that the zeta function is $n$ times a Riemann sum for the integral
$$ z(s) = \int_0^1 Z(x)^s \; dx \; . $$
We have seen in the Hodge case already that for smooth functions, the 
error $\sum_{k=1}^n f(k/n) - n \int_0^1 f(x) \; dx$ goes to zero. This 
implies that if $\int_0^1 f(x) \; dx$ is non-zero, then for large enough $n$
the sum $\sum_{k=1}^n f(k/n)$ can not be zero. \\

\paragraph{}
Using a change of variables $v=\sin(\pi x)^2$ we can write
$$ z(it) = \int_0^1 {\rm Re} \frac{2 \left(\sqrt{4 v^2+1}+2 v\right)^{2 i t}}{\pi  \sqrt{1-v} \sqrt{v}} \, dv  \;  $$
or in the form d) of theorem~(\ref{limiting}).


\begin{figure}
\scalebox{0.3}{\includegraphics{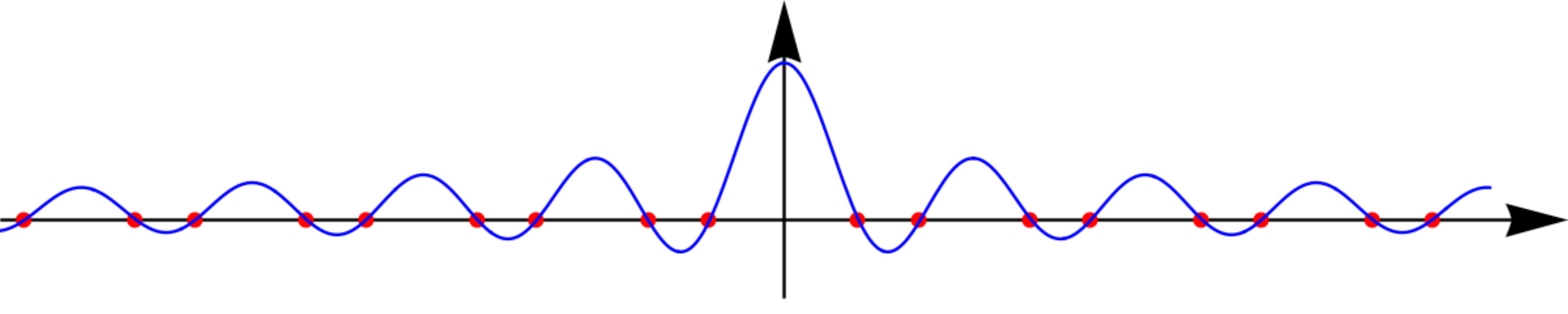}}
\caption{
The limiting zeta function is here restricted to the imaginary axes so that we can
plot the graph of $t \to z(it)$ on $t \in [-10,10]$ and mark the roots in 
that interval. 
}
\end{figure}

\paragraph{}
The limiting function $z(s)$ can be described better by turning the
picture by 90 degrees so that all roots become real.
The function $t \to z(it)$ is now a function which is real on the real axes. 
It is given in part c) of theorem~(\ref{limiting}). 
The roots of the limiting zeta function are the roots of this function. 


\paragraph{}
{\bf Hypothesis:} All the roots of $t \in \CC \to w(t)$ are on the real line. \\

As this is an explicit generalized hypergeometric series, we don't expect
this to be difficult. 
We have just not found any reference yet, nor an argument 
which assures that. Let us thus express the limiting function in various ways:

\begin{thm}[limiting zeta function]
\label{limiting}
We can rewrite the function in various ways: \\
{\bf a)} Hypergeometric Series: $\zeta(2s)=\pi \, _4F_3\left(\frac{1}{4},\frac{3}{4},-s,s;\frac{1}{2},\frac{1}{2},1;-4\right)$.
$$ \zeta(2s,z) = \pi \sum_{n=0}^{\infty} \frac{(1/4)_n (3/4)_n (-s)_n,(s)_n}{(1/2)_n,(1/2)_n,(1)_n} \frac{z^n}{n!} $$
analytically continued to $z=-4$. \\
{\bf b)} Fourier Transform: we have a $\cos$-transform of a smooth measure on an interval
$$ \zeta(it) = \int_0^{\log(2+\sqrt{5}} \cos(t y) \sqrt{\frac{\cosh(y) \coth(y)}{2 - \sinh(y)}} \; dy \; . $$
{\bf c)} Original expression:
$$ \zeta(it) = \int_0^1 2\cos(t \log((\sqrt{4 \sin^4(\pi x)+1}+2 \sin^2(\pi x))^2 )) \; dx \; . $$
{\bf d)} Simplest integral
$$ \zeta(it) = \int_0^1 \frac{2 \cos \left(2 t \log \left(\sqrt{4 v^2+1}+2 v\right)\right)}{\pi  \sqrt{1-v} \sqrt{v}} \, dv \;  $$
{\bf e)} Abelian integral: if $a=2+\sqrt{5}$, then 
$$ \zeta(s) = \int_1^{a} \frac{\left(z^2+1\right) z^{-1-s} \left(1+z^{2 s}\right)}
                              {2 \sqrt{(1+z)(1-z)(z-a)(z-1/a)}} 
            = \int_1^a F(z) \; dz \; . $$

\end{thm}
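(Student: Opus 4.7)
The plan is to take (c) as given, since it follows immediately from the identity $\zeta(s) = \int_0^1 Z(x,s)\,dx$ established just before the theorem together with the relation $f_+(y) f_-(y) = -1$, which implies $f_-^{2s} = f_+^{-2s}$ and hence $Z(x, it) = 2\cos(2t\log f_+(4\sin^2(\pi x)))$. The other four forms are then obtained from (c) by explicit changes of variables, in each case using the symmetry $[0,1] = [0,1/2] \cup [1/2,1]$ about $x=1/2$ to reduce to a monotone substitution.

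For (c) to (d), I substitute $v = \sin^2(\pi x)$; doubling over the two symmetric halves gives $dx = dv/(\pi\sqrt{v(1-v)})$, which transforms (c) into (d). For (d) to (b), I substitute $y = \sinh^{-1}(2v)$, so that $v = \sinh(y)/2$, $dv = \tfrac{1}{2}\cosh(y)\,dy$, and $\sqrt{v(1-v)} = \tfrac{1}{2}\sqrt{\sinh(y)(2-\sinh(y))}$. The upper limit becomes $\sinh^{-1}(2) = \log(2+\sqrt{5})$, and the weight reorganises via $\coth(y) = \cosh(y)/\sinh(y)$ into the square-root factor in (b).

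For (c) to (e), I substitute $z = f_+(4\sin^2(\pi x))$, so that $z - 1/z = 4\sin^2(\pi x)$ and, as $x$ ranges over $[0,1/2]$, $z$ ranges over $[1,a]$ with $a = 2+\sqrt{5}$. One computes $\sin^2(\pi x) = (z^2-1)/(4z)$ and $\cos^2(\pi x) = -(z^2-4z-1)/(4z)$, giving
\begin{equation*}
\sin(2\pi x) = \frac{\sqrt{-(z^2-1)(z^2-4z-1)}}{2z},
\end{equation*}
and implicit differentiation of $z - 1/z = 4\sin^2(\pi x)$ yields $dx = (z^2+1)/(4\pi z^2 \sin(2\pi x))\,dz$. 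After doubling and collecting factors, one obtains the Abelian-integral form (e), whose discriminant $-(z^2-1)(z^2-4z-1) = (1+z)(1-z)(z^2-4z-1)$ is exactly the defining polynomial of the real elliptic curve $w^2 = (1+z)(1-z)(z^2-4z-1)$ mentioned in the abstract; its involution $(z,w)\mapsto(1/z,w/z^2)$ is the $s\mapsto -s$ functional symmetry of $\zeta$.

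For (c) to (a), the crucial identity is
\begin{equation*}
f_+(4v)^{2s} + f_-(4v)^{2s} = 2\cosh\bigl(2s\sinh^{-1}(2v)\bigr) = 2\cdot {}_2F_1\!\left(-s,s;\tfrac{1}{2};-4v^2\right),
\end{equation*}
which follows from the classical formula ${}_2F_1(-a,a;1/2;\sin^2\theta) = \cos(2a\theta)$ evaluated at the imaginary argument $\theta = -i\sinh^{-1}(2v)$. Combining this with the substitution in (d) and interchanging the hypergeometric series with the integral gives
\begin{equation*}
\zeta(s) = \frac{2}{\pi}\sum_{n=0}^{\infty}\frac{(-s)_n(s)_n}{(1/2)_n\,n!}(-4)^n\int_0^1\frac{v^{2n}\,dv}{\sqrt{v(1-v)}},
\end{equation*}
where each beta integral equals $B(2n+\tfrac{1}{2},\tfrac{1}{2}) = \pi(1/2)_{2n}/(2n)!$. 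Legendre duplication gives $(1/2)_{2n} = 4^n(1/4)_n(3/4)_n$ and $(2n)! = 4^n(1/2)_n\,n!$, and these Pochhammer rearrangements assemble the sum into the ${}_4F_3(1/4,3/4,-s,s;1/2,1/2,1;-4)$ of (a). The main technical obstacle is justifying the series-integral interchange beyond the natural radius of convergence of the inner ${}_2F_1$: one must either invoke Abel summation along the real axis or use the integral representation itself as the analytic continuation of the generalized hypergeometric function, which is the standard route via Euler-type integral representations of ${}_4F_3$.
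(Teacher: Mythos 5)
Your chain of substitutions is correct and, for parts (b)--(e), essentially the one the paper uses: (c) comes from $\zeta(s)=\int_0^1 Z(x,s)\,dx$ together with $f_+f_-=-1$, (d) is the substitution $v=\sin^2(\pi x)$ stated just before the theorem, and (b) is the monotone substitution $u=f(x)$ (equivalently your $y=\sinh^{-1}(2v)$) turning the integral into a $\cos$-transform of the density $h(y)=\cosh(y)/\sqrt{(2-\sinh(y))\sinh(y)}$; your route to (e) via $z-1/z=4\sin^2(\pi x)$, with $\cos^2(\pi x)=-(z^2-4z-1)/(4z)$ producing the quartic discriminant, is the substitution the paper leaves implicit. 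Where you genuinely diverge is part (a): the paper goes backwards from the Abelian integral (e) via the palindromic substitution $u=z-1/z$ to $\int_0^4 f_\pm(u)^s(4u-u^2)^{-1/2}\,du$ and then simply \emph{asserts} the ${}_4F_3$ identification, whereas you derive it from (d) using $f_+^{2s}+f_+^{-2s}=2\cosh(2s\sinh^{-1}(2v))={}_2F_1(-s,s;\tfrac12;-4v^2)$, termwise beta integrals $B(2n+\tfrac12,\tfrac12)=\pi(1/2)_{2n}/(2n)!$, and the duplication identities $(1/2)_{2n}=4^n(1/4)_n(3/4)_n$, $(2n)!=4^n(1/2)_n\,n!$. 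This fills in the one computation the paper omits, and your explicit flagging of the series--integral interchange past the radius of convergence of the inner ${}_2F_1$ (with Abel summation or the Euler-type integral representation as the continuation) is more careful than the paper's bare ``analytically continued to $z=-4$''. One caveat: your bookkeeping yields prefactors ($2$ rather than $\pi$ in (a), and $\cos(2ty)$ with a $2/\pi$ weight rather than $\cos(ty)$ in (b)) that do not match the printed theorem; but the printed normalizations of (a), (b), (d), (e) are not mutually consistent in the paper either (compare the Mathematica block C), so this reflects the source rather than an error in your argument.
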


\paragraph{}
\begin{proof}
The function $f(x) = \log(\sqrt{4 \sin^4(\pi x)+1}+2 \sin^2(\pi x))$
is a $1$-periodic, non-negative function with roots at $0$ and $1$. It is monotone from $0$ to $1/2$
For any smooth $1$-periodic non-negative function $f$ we can look at the complex function
$w(z) = 2 \int_0^{1/2} \cos(z f(x)) \; dx$. For general periodic $f$, the transformed function has 
non-real roots but in our case we can make a substitution: $u=f(x)$ gives
$x=-\arcsin(\sqrt{2 \sinh(u)}/2)/\pi = g(u)$. The integral is now
the $\cos$-transform $(1/(2\pi)) \int_0^{\log(2+\sqrt{5})} \cos(z u) h(u) \; du$, where $h$ is the function
$$ h(y)=\cosh(y)/\sqrt{(2-\sinh(y)) \sinh(y)} $$ 
on $I=[0,\log(2+\sqrt{5})] \subset [0,\pi/2]$. We can rewrite as in part {\bf b)} of the theorem. 
In other words, the limiting zeta function is $4\pi$ times the $\cos$-transform of the probability measure 
$\mu$ with density $h(y)$ on $I$. It is not true that for a general probability measure $\mu$ on the real 
line with compact support, the $\cos$-transform of $\mu$ is an entire function with only roots on the 
real axes. 

\paragraph{}
Let $a=2+\sqrt{5}$ and $p(x)=(1+z)(1-z)(z-a)(z-1/a)$. Then 
$$ \zeta(i t) = \int_1^{a} \frac{\left(z^2+1\right) z^{-1-i t} \left(1+z^{2 i t}\right)}{2 \sqrt{p(z)}}  \; dz 
   = \int_1^a F_z(s) \; dz \; . $$
Because $p$ is a polynomial of degree $4$ it is a generalized Abelian integral. Technically, for every 
integer $s=it$, it can be expressed as an Abelian integral integrating over a closed loop on the real 
elliptic Riemann curve $w^2=p(z)$ with $p(z)=(1+z)(1-z)(z^2-4z-1)$. (Nowadays, elliptic curves are mostly
written as cubic curves, in particular in Weierstrass form, but historically, the real quartic forms have
prevailed (by Abel, Jacobi, Gauss and even Euler earlier \cite{Alling}). There was a revival of quartic 
normal forms in particular in cryptology, like curves in Jacobi quartic form, where the addition formulas are 
particularly elegant). The polynomial $p$ can be written as 
$p(z)=z^2 ( (z-1/z)^2-4 (z-1/z))$. The elliptic curve not only has the anti-analytic symmetry
$(z,w) \to (\overline{z},\overline{w})$ but also features the analytic involution
$(z,w) \to (1/z,w/z^2)$. This involution will play a role in the future.

\paragraph{}
It follows that for every integer $s$, the value is an elliptic integral. For every rational $s \in \QQ$,
we express the value as a hyperelliptic integral for a higher degree polynomial. For general $s$, it
is just the value of a generalized hyperelliptic function. The Abelian connection especially
implies that $\zeta(it)$ is a period for every rational $t \in \QQ$. This fact again contributes to the impression
that the Dyadic story is ``integrable" as Abelian integrals are everywhere in integrable dynamical 
systems. For the classical Riemann zeta function, we only know that the 
integers larger than $1$ are periods. (The class of periods is rather mysterious as one does not even know an
explicit example of a number (an explicit expression like $e^{\pi} + \sqrt{2}$) which is not a  
period evenso a cardinality argument shows that almost all are \cite{Periods}.)


\paragraph{}
Here is an other piece of information which came up when trying to exclude
roots away from the axes of the hyperelliptic function.
The logarithmic derivative of the function $F_z(s)$ simplifies to
$$ \frac{F_z'(s)}{F_z(s)} = \log(z) (1-\frac{2}{(1+z^{2s})})  \; . $$
It would be nice to use some contour
integral to exclude roots away from the imaginary axes.
But the logarithmic derivative does not simplify any more nicely 
after the integral of $F_z$ over $z$ is performed. 


\paragraph{}
As the polynomial $p(x)$ is palindromic, we can make a transformation $u=x+1/x$.  This gives 
$$  \sqrt{\frac{-(1 + z^2)^2}{1+4z-2z^2-4z^3+z^4}} \frac{1}{2z} dz = \frac{1}{\sqrt{4u-u^2}} du $$ 
and we get the integrals
$$ \int_0^4  ((u + \sqrt{4 + u^2})/2)^s/\sqrt{4u-u^2} \; du 
 + \int_0^4  ((u - \sqrt{4 + u^2})/2)^s/\sqrt{4u-u^2} \; du  \; . $$ 
This gives 
$$ \zeta(s) = \pi \, _4F_3\left(\frac{1}{4},\frac{3}{4},-\frac{s}{2},\frac{s}{2};\frac{1}{2},\frac{1}{2},1;-4\right) . $$
We use the notation of \cite{Hypergeometric,Koekoek}. By definition, this is the (analytically continued value) of the series 
$$ \zeta(2s,z) = \sum_{n=0}^{\infty} \frac{(1/4)_n (3/4)_n (-s)_n (s)_n}{(1/2)_n (1/2)_n (1)_n} \frac{z^n}{n!} \;  $$
for $z=-4$. It uses the Pochhammer notation $(a)_0=1,(a)_1=a,(a)_n=a(a+1)\cdots (a+n-1)$, which is also called a raising factorial.
Now, this is a sum of polynomials of the form $c_n (s)_n (-s)_n$ but the series is understood only
by analytic continuation; for all hypergeometric functions of type $_4F_3$,
the radius of convergence in $z$ is $1$. 
A hypergeometric function is by definition a sum 
$\sum_n c_n$, if $c_{n+1}/c_n$ is a rational function in $n$ \cite{Koekoek}. 
\end{proof}


\section{Remarks}

\paragraph{}
The Barycentric limit of a $1$-dimensional complex is naturally the
dyadic group $\DD_2$ of integers. Similarly as the circle $\TT$ is the Pontryagin
dual of the integers $\ZZ$, the group $\DD_2$ is the natural dual to the 
Pr\"ufer group $\PP_2$, the group of $2$-adic rationals modulo $1$. The analogy
is to compare $\DD_2$ with $\TT$. Both are compact and a continuum,
having a dual which is not-compact and countable. But
$\DD_2$ is quantized in the sense that it features a smallest translation. It is a world
in which the Planck constant is ``hard wired" in. 
The arithmetic in the dyadic case is simpler as there are no interesting primes in the 
dual group $\PP_2$. It is no surprise therefore that the 
Zeta story is simpler. This realization also should squash any speculation that there
is a direct bridge between the "dyadic zeta function" and the "rational zeta function". 
As \cite{KnillZeta} and especially \cite{FriedliKarlsson} show in the Hodge Laplacian case,
the finite dimensional cases can approximate the Riemann zeta function and the speed of
approximation plays a role for the location of the roots. 

\begin{figure}
\scalebox{0.43}{\includegraphics{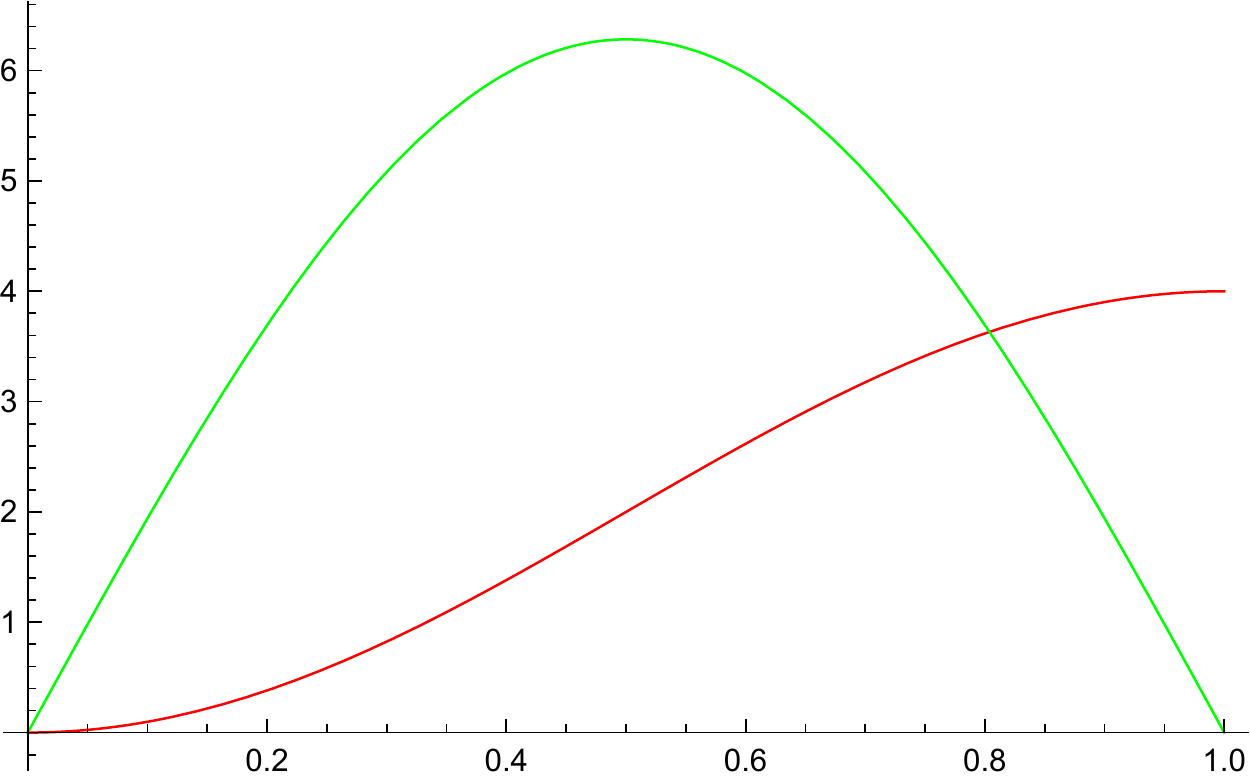}}
\scalebox{0.43}{\includegraphics{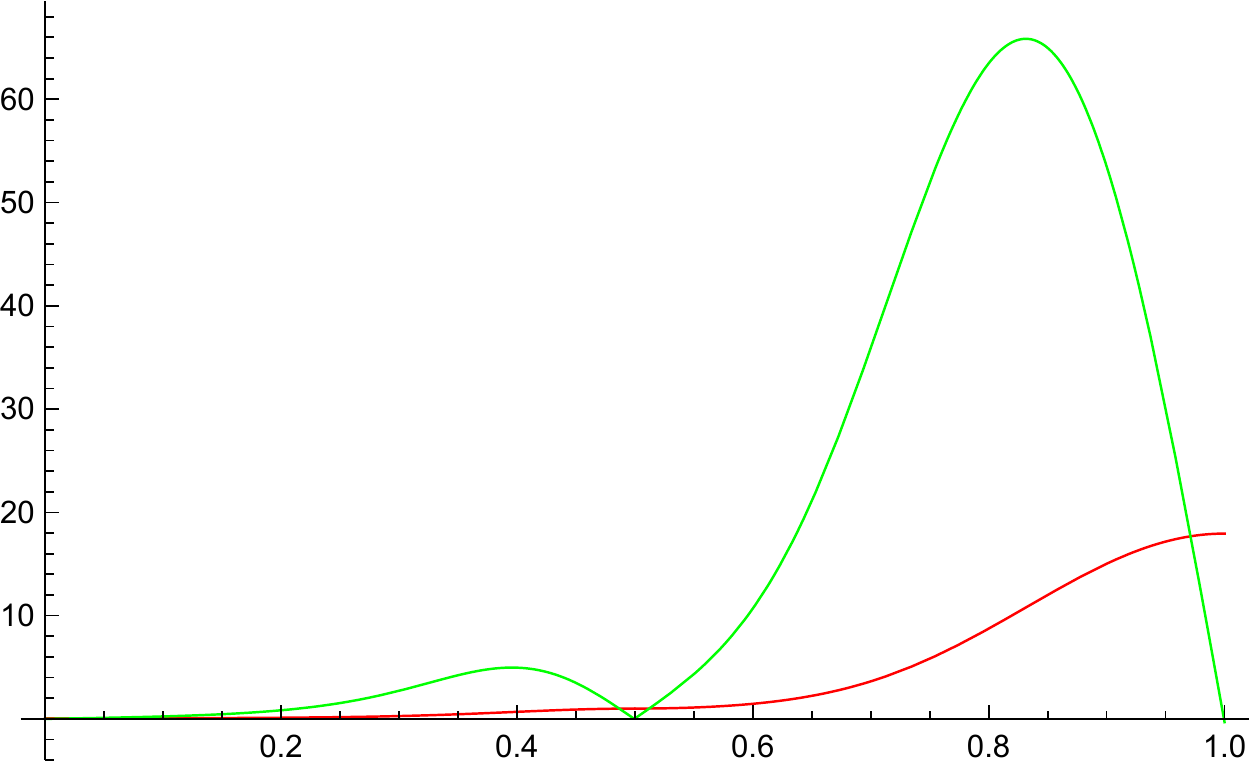}}
\caption{
The limiting function $f(x) = \lambda_{[n x]}$ and its derivative in the Hodge
case $H=D^2$ and then for the connection Laplacian case $L^2$. Both
limiting functions are exactly known.
}
\end{figure}

\paragraph{}
One can get to the dyadic picture via ergodic theory. 
When performing a Barycentric refinements of random Jacobi matrix $L$ defined over any
measure preserving dynamical system $(X,T,m)$, the renormalization map \cite{Kni95} allows
to write $L=D^2+c$ with an other Laplacian $D$ defined over a renormalized system
$(Y,S,n)$. The renormalization of the base dynamical system $T \to S$ is almost trivial as 
it is the 2:1 integral extension \cite{Friedman}, where one takes two copies of the probability space
and have $S$ map one to the other and then use $T$ to go back to the first. This map is a
contraction in a complete metric space of measure preserving dynamical systems
and has by the Banach fixed point theorem a unique fixed point $T$. This
von-Neumann-Kakutani system $T$ is also known under the name "adding machine".
The ergodic theory, $T$ is completely understood because the discrete spectrum in
ergodic theory allows to rewrite the system as a group translation. The Koopman operator $U$ on the Hilbert
space $L^2(\DD_2)$ has discrete spectrum which is the Pr\"ufer group. By general facts in 
ergodic theory \cite{CFS}, the system is now measure theoretically equivalent to a group translation 
on $\DD_2$. The limiting Hodge operators which are Jacobi matrices in finite dimensions
have the spectrum on the Julia set of quadratic maps. See also \cite{BGH,BGH}.

\begin{figure}
\scalebox{0.05}{\includegraphics{figures/triangle.pdf}}
\scalebox{0.05}{\includegraphics{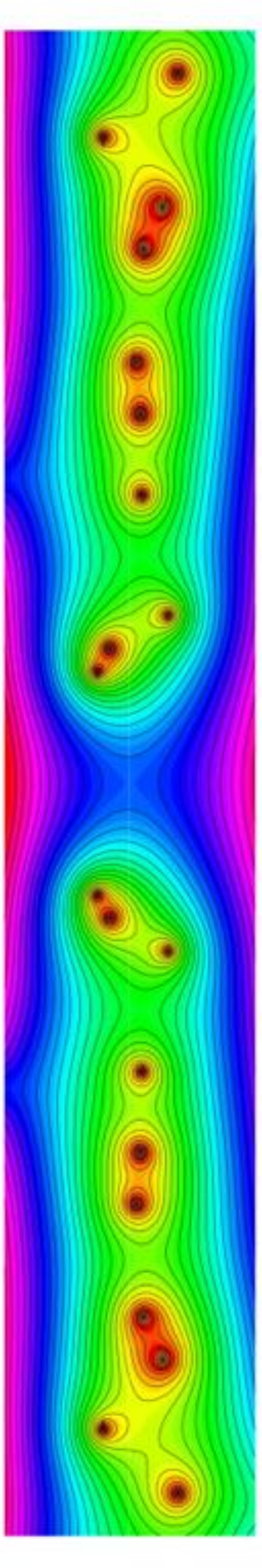}}
\scalebox{0.05}{\includegraphics{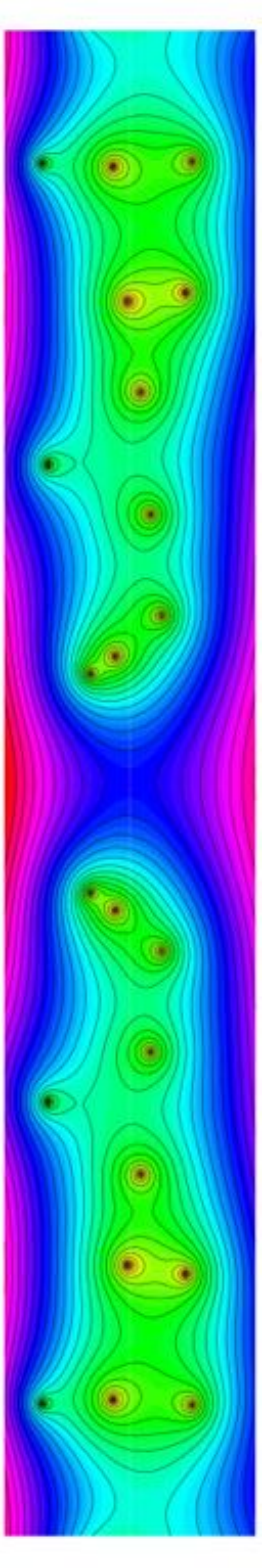}}
\scalebox{0.05}{\includegraphics{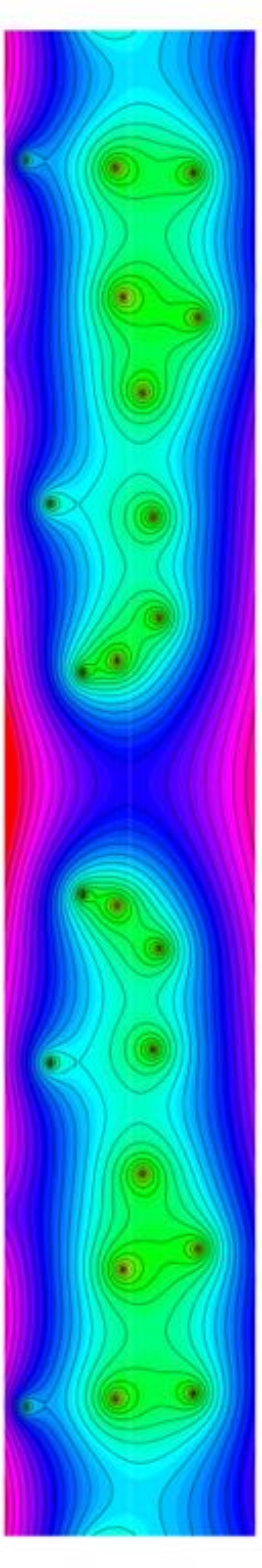}}
\caption{
The spectrum of Barycentric refinements $G_i$ of the triangle $G_0=K_3$. 
We don't know the density of states of $L(G_i)$ in the Barycentric limit yet. We also
don't know the limiting distribution of the zeros of $\zeta(G_i)$ but expect it to 
converge to a limiting measure in the complex plane. 
}
\end{figure}

\paragraph{}
Both the Hodge Laplacian $H$ as well as the connection
Laplacian $L$ have a Barycentric limit, an almost periodic operator on the 
group of dyadic integers. In both cases, the limiting operator is
the same as long as we start with a $1$-dimensional complex. The limiting 
connection operator is invertible. It has a ``mass gap". The dyadic group of integers
is compact but discrete. It is the dyadic analogue of the circle but unlike the 
circle $\TT$ which has a continuum of translations, there is a smallest translation 
on $\DD_2$. The limiting Riemann zeta function behaves differently than the Riemann zeta function
of the circle and appears more approachable. In the Hodge case, we have only a vague intuition 
about the limiting distribution of roots. 

\begin{figure}
\scalebox{0.03}{\includegraphics{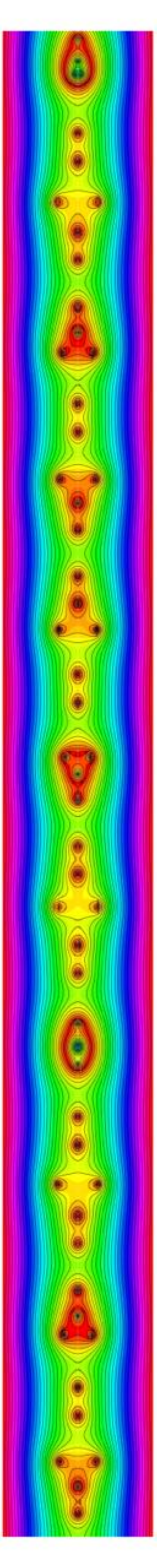}}
\scalebox{0.03}{\includegraphics{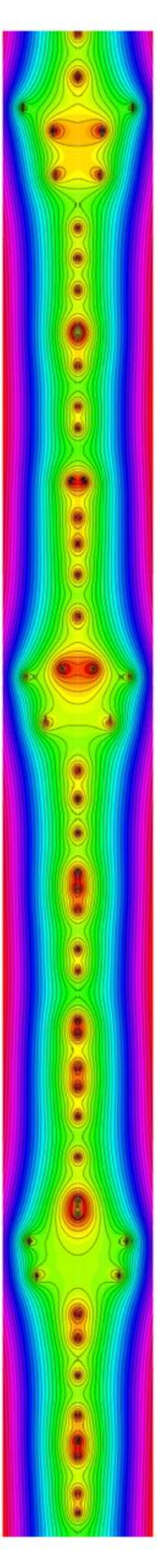}}
\scalebox{0.03}{\includegraphics{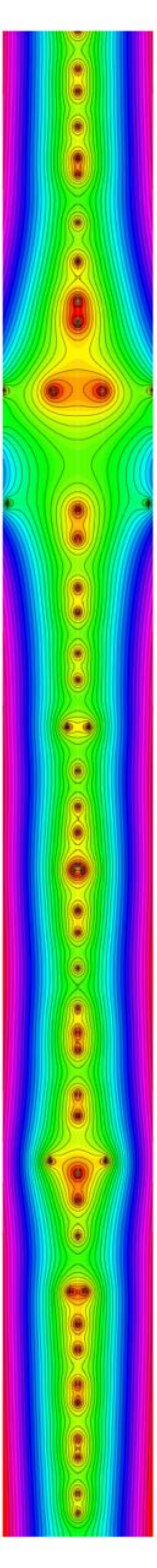}}
\scalebox{0.03}{\includegraphics{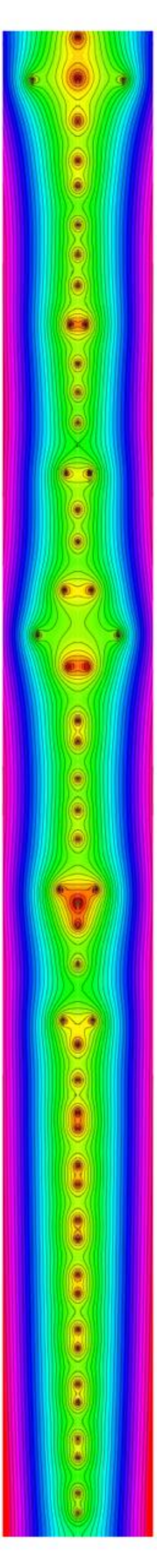}}
\scalebox{0.03}{\includegraphics{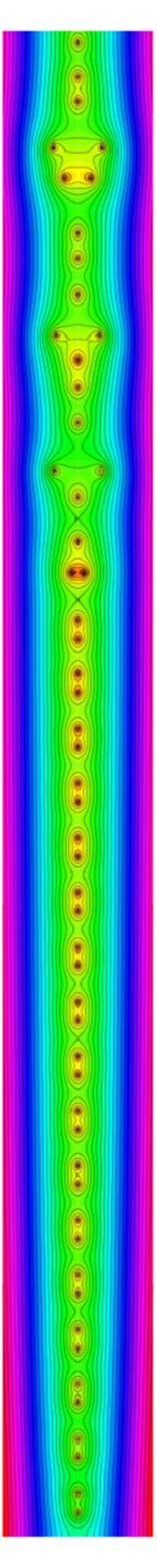}}
\scalebox{0.03}{\includegraphics{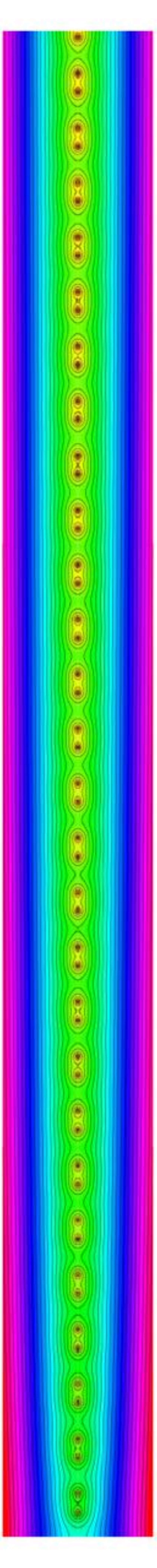}}
\caption{
We see the level curves of $|\zeta_G(s)|$ for the first
few Barycentric refinements of a discrete circle
$G=C_4,C_8,C_{16},C_{32},C_{64},C_{128}$. We see in each
of the 6 cases the rectangle $[-3,3] \times [0,60]$.
The roots converge more and more to the imaginary axes,
the line of symmetry of the functional equation.
}
\end{figure}

\section{Questions}

\begin{itemize}

\item We still need a reference or argument which assures that the entire function
$$ \zeta(it)=_4F_3\left(\frac{1}{4},\frac{3}{4},-it,it;\frac{1}{2},\frac{1}{2},1;-4\right) $$
has all roots on the real axes. We tried to rewrite this function in various ways, but could
place it in a class of functions which have real roots. Finite Pochhammer sums 
like $\sum_k a_k (s)_n (-s)_n$ or cos-transforms of measures on the real line can in general 
have non-real roots too. We tried contour integration and Sturmian criteria so far. 
Why was it possible in a similar Hodge case that roots are absent for ${\rm Re}(s)<0$? 
The reason was that we had Riemann sums of an explicit function
$g(s) = \int_0^1 4 \sin^s(\pi x) \; dx$for which 
$$ g(s) = \frac{4 \Gamma(\frac{1+s}{2})}{\sqrt{\pi} \Gamma(\frac{s}{2}+1)} $$
reveals that there are no roots because the Gamma function has no roots. 

\item We would like to have explicit expressions for the roots. We would also like to know
in the case of each $G=C_n$, at which place the first pair of roots away on the imaginary axes appears. We have not
investigated whether there are any circular graphs $C_n$ for which all roots are on the imaginary axes. 
A ``lock-in" situation, meaning the existence of an $n_0$ such that for all $n>n_0$, all roots are on the 
imaginary axes would be a big surprise but it is not excluded at this point. 
An other open question is whether there are higher dimensional
complexes for which all roots of the zeta function are on the imaginary axes. We can give products
of simplicial complexes for which that happens but these are not simplicial complexes any more. 

\item In which cases does the spectrum of $L^2$ determine the spectrum of $L$? We know
that in general, it does not. How large can the 
multiplicity of eigenvalues become? We see that is rare that negative
and positive eigenvalues cancel each other when adding the spectrum. We looked
experimentally for cases where $\lambda_k+\lambda_l=0$ and found only very few. So far,
an example of a simplicial complex $G$ with $f$-vector $(14, 36, 27, 9)$ was found, where two 
eigenvalues $\pm (1 + \sqrt{5})/2$ appear. 
The case of non-simple spectrum $\lambda_k=\lambda_l$ happened so far only for 
$\lambda=1$ or $\lambda=-1$, which for $1$-dimensional complexes linked to the $0$-dimensional
and $1$-dimensional cohomology. 


\item In general, given any simplicial complex $G$, the complex $G$ generates a monoid 
in the strong ring for which the union of the spectrum is a multiplicative subgroup of $R$. 
As a finitely presented Abelian group, it has a Pr\"ufer rank. What is this rank?
Is this rank invariant under Barycentric refinements? 

\item There is more about meaning of the eigenvalues $1$ or $-1$. 
Their multiplicity are important combinatorial invariants:
if $G$ is a Barycentric refinement of a $1$-dimensional complex, then 
the eigenvalues $1$ belong to the zero'th cohomology $H^0(G)$, the eigenvalues $-1$ belong to the
first cohomology $H^1(G)$. The eigenfunctions to the eigenvalue $1$ are supported on the zero-dimensional
part, the eigenfunctions to the eigenvalue $-1$ are supported on the $1$-dimensional part. 
We have seen earlier examples showing that one can not hear the cohomology. Yes, but 
these examples were not Barycentric refinements. Indeed, the eigenfunctions define a vertex
or edge coloring and require the graph to be bipartite. Every Barycentric refinement of a 
$1$-dimensional complex is trivially bipartite, the vertex dimension providing a 2-coloring.

\item For two or higher dimensional simplicial complexes, we have no idea yet
how to get the cohomology from the spectrum or zeta function or whether it is accessible
from the roots in the Barycentric limit. 
Exploring this numerically is harder as the number of simplices explodes
much faster than in the $1$-dimensional case. 

\item A matrix $L$ is called totally unimodular, if every 
invertible minor in $L$ is unimodular, meaning ${\rm det}(L_P) \in \{-1,0,1\}$.
An interesting question is in which cases the 
connection matrix $L$ of a $1$-dimensional complex is unimodular.
It is already not for circular graphs but appears so for linear graphs. 

\item It would be nice to have an analogue statement relating
the Riemann sum approximation $\zeta_n(s)/n-\zeta(s)$ asymptotics
with the existence of roots of $\zeta(s)$ at a point $s$.  


\end{itemize}

\section{Mathematica Code}

\paragraph{}
The code can be grabbed by looking at the LaTeX source on the ArXiv.
Each of the blocks is self-contained. We first compute $\zeta(C_n)$
of a circular graph and compare with the explicit formulas for the eigenvalues. 
Then we compare the zeta function value with the limiting value.  

\begin{tiny}
\lstset{language=Mathematica} \lstset{frameround=fttt}
\begin{lstlisting}[frame=single]
(* A)   Explicit Connection spectrum in circular case               *)
Generate[A_]:=Delete[Union[Sort[Flatten[Map[Subsets,A],1]]],1];
M=26; A=Table[{k,Mod[k,M]+1},{k,M}]; G=Generate[A]; n=Length[G];
L=Table[If[DisjointQ[G[[k]],G[[l]]],0,1],{k,n},{l,n}];
EV=Sort[Eigenvalues[1.0*L]]; W=EV^2;Z[s_]:=Sum[W[[k]]^(-s/2),{k,n}]/n;
mu[k_]:=4 Sin[Pi k/M]^2; Q=Sort[Table[1.0 mu[k],{k,M}]];
f1[y_]:=(y + Sqrt[4 + y^2])/2; f2[y_]:=(y - Sqrt[4 + y^2])/2;
V=Sort[Flatten[{Map[f1,Q],Map[f2,Q]}]]; Print[V==EV]; 
X[t_]:=HypergeometricPFQ[{1/4,3/4,-t/2,t/2},{1/2,1/2,1},-4];
T=10*Random[]; Print[X[I T]-Z[I T]]   (* How close to limit? *)
\end{lstlisting}
\end{tiny}

Now, we first generate a random $1$-dimensional complex,
then check the functional equation.

\begin{tiny}
\lstset{language=Mathematica} \lstset{frameround=fttt}
\begin{lstlisting}[frame=single]
(* B)   Functional equation for one dimensional complexes           *)
Generate[A_]:=Delete[Union[Sort[Flatten[Map[Subsets,A],1]]],1];
sort[x_]:=Sort[{x[[1]],x[[2]]}]; Gr=RandomGraph[{9,20}];
A=Union[Map[sort,EdgeList[Gr]]]; G=Generate[A]; n=Length[G]; 
L=Table[If[DisjointQ[G[[k]],G[[l]]],0,1],{k,n},{l,n}];
V=Sort[Eigenvalues[1.0*L]]; 
Print[Total[Chop[Sort[V^2]]-Sort[1/V^2]]]
\end{lstlisting}
\end{tiny}

Here we collect various expressions for the
limiting Zeta function which were mentioned in the text. 

\begin{tiny}
\lstset{language=Mathematica} \lstset{frameround=fttt}
\begin{lstlisting}[frame=single]
(* C)  Rewriting the limiting Zeta Function                         *)
NI=NIntegrate; Clear[U,V,W,p,P,h,H,X,T]; 
U[t_]:=NI[Cos[t*Log[(2*Sin[Pi*x]^2+Sqrt[1+4*Sin[Pi*x]^4])]], {x,0,1}]; 
V[t_]:=NI[2*Cos[t*Log[2*u^2+Sqrt[1+4*u^4]]]/(Pi*Sqrt[1-u^2]),{u,0,1}]; 
W[t_]:=NI[2*Cos[t*Log[2*v+Sqrt[1+4*v^2]]]/(2Pi*Sqrt[v(1-v)]),{v,0,1}]; 
p[z_]:=(1+z)(1-z)(z^2-4z-1); a=1;b=2+Sqrt[5]; 
P[t_]:=Re[NI[(1+z^2)(z^(I*t)+z^(-I*t))/(2Pi*z*Sqrt[p[z]]),  {z,a,b}]];
h[y_]:=Sqrt[Cosh[y] Coth[y]/(2-Sinh[y])]; 
H[t_]:=NI[Cos[t*y]*h[y],{y,0,Log[2+Sqrt[5]]}]/Pi;  (* Cos Transform *)
X[t_]:=Re[HypergeometricPFQ[{1/4,3/4,-I t/2,I t/2},{1/2,1/2,1},-4]];
T=10*Random[];  Print[{U[T],V[T],W[T],P[T],H[T],X[T]}];
\end{lstlisting}
\end{tiny}

For a general $G$ we compute $\chi(G)$ and ${\rm det}(L)$ of $L$ 
using the zeta function $\zeta(G)$.
Also illustrated is the energy theorem $\chi(G) = \sum_{x} \sum_y g(x,y)$
where $g=L^{-1}$. 

\begin{tiny}
\lstset{language=Mathematica} \lstset{frameround=fttt}
\begin{lstlisting}[frame=single]
(* D)  Euler Chi and Det from Zeta                                  *)
Generate[A_]:=Delete[Union[Sort[Flatten[Map[Subsets,A],1]]],1]
R[n_,m_]:=Module[{A={},X=Range[n],k},Do[k:=1+Random[Integer,n-1];
  A=Append[A,Union[RandomChoice[X,k]]],{m}];Generate[A]];
G=R[10,12];n=Length[G]; Dim=Map[Length,G]-1;
Fvector=Delete[BinCounts[Dim],1]; Vol=Total[Fvector];
L=Table[If[DisjointQ[G[[k]],G[[l]]],0,1],{k,n},{l,n}];
Pos=Length[Position[Sign[Eigenvalues[1.0*L]],1]];
Bos=Length[Position[Flatten[Map[OddQ,Map[Length,G]]],True]];
Fer=n-Bos; Euler=Bos-Fer; Fred=Det[1.0*L]; Fermi=(-1)^Fer;
Energy=Round[Total[Flatten[Inverse[1.0 L]]]];
Checks={Energy,Fred,Pos,Energy==Euler,Fred==Fermi,Pos==Bos,Vol==n}
EV=Sort[Eigenvalues[1.0*L]]; zeta[s_]:=Sum[EV[[k]]^(-s),{k,n}];
a=Chop[ I zeta'[0]/Pi ]; Print[{Chop[a],Fer}];
Print[{Euler,Chop[zeta[0]-2 I zeta'[0]/Pi]}];
\end{lstlisting}
\end{tiny}

To illustrate the proof of the proposition we compute 
the minors of the deformation $K(t)$, then 
the coefficients of the characteristic polynomial $p$. The polynomial $p$
is also computed directly. In both cases, the function $\delta_k(t)$ appearing
in the artillery proposition is given. 

\begin{tiny}
\lstset{language=Mathematica} \lstset{frameround=fttt}
\begin{lstlisting}[frame=single]
(* E) Illustrating the artillery Proposition                        *)
G={{1},{2},{3},{1,2},{2,3}}; n=Length[G];  
L=Table[If[DisjointQ[G[[k]],G[[l]]],0,1],{k,n},{l,n}]; K=L;
Do[If[(k==n||l==n)&&k+l!=2n,K[[k, l]]=t K[[k, l]]],{k,n},{l,n}];
m=Table[Flatten[Minors[K,k]],{k,0,n}];
q=Table[m[[k+1]].m[[k+1]],{k,0,n}]; Phi=Simplify[q-Reverse[q]];
p=-CoefficientList[CharacteristicPolynomial[K.K,x] /. x->-x,x]
phi=Simplify[p-Reverse[p]]; Print[Phi-phi]; Print[phi];
\end{lstlisting}
\end{tiny}

Finally, let us look at the Riemann sum approximation and especially the Friedli-Karlsson
function $h_n(s)$ \cite{FriedliKarlsson}. We check numerically $h_n(1-s)/h_n(s)$ goes to
one. Equivalent to the Riemann hypothesis is that  
the convergence holds for all $s$ in the critical strip.

\begin{tiny}
\lstset{language=Mathematica} \lstset{frameround=fttt}
\begin{lstlisting}[frame=single]
(* F)    Riemann Sum in Hodge case                                 *)

h[n_,s_]:=Module[{z,Z},
  z=2^(-s) Sum[1/Sin[Pi k/n]^s,{k,n-1}]; (* discrete zeta function *)
  Z=Gamma[1/2-s/2]/(2^s*Sqrt[Pi]*Gamma[1-s/2]); (* limiting zeta   *)
  (4 Pi)^(s/2) Gamma[s/2] n^(-s)*(z-n Z)];    (* asymptotic        *)
n=1000; s=Random[]+400 I Random[]; Print[Abs[h[n,1-s]/h[n,s]]];
\end{lstlisting}
\end{tiny}

In our case, with the connection Laplacian, the approximation is much
better. While before, one had to compare the value at $s$ with the value
at $1-s$, we have here total symmetry $s \to -s$ also in the finite
case. It appears as if $|\zeta_n/n-\zeta|$ is incredibly good.
The following experiment probes whether the error is of 
order $1/n^3$. 

\begin{tiny}
\lstset{language=Mathematica} \lstset{frameround=fttt}
\begin{lstlisting}[frame=single]
(* G)    Riemann Sum in connection case                           *)

H[n_,s_]:=Module[{m=2n,Z,z,q}, q[x_]:=Abs[x/2]^s; 
Z=HypergeometricPFQ[{1/4,3/4,-s/2,s/2},{1/2,1/2,1},-4];  (* Dyadic*)
z=Sum[y=4Sin[Pi k/n]^2;u=Sqrt[4+y^2];q[y+u]+q[y-u],{k,n}];
Abs[m^2(z-m Z)]]; s=Random[]+10*I Random[]; Print[H[10000,s]];
\end{lstlisting}
\end{tiny}

The asymptotic error as a function of of $n$ and $s$ still needs to be
investigated. Also interesting would be to know what happens
in the random case, in the large $n$ limit of some Erd\"os-R\'enyi
spaces $E(n,p)$, where the Whitney complex of a graph with $n$
vertices is taken in which each edge is turned on with probability
$p$. The pictures for small $n$ suggest that the zero-locus
measure could converge to a continuous measure for $n \to \infty$. 

\begin{figure}
\scalebox{0.04}{\includegraphics{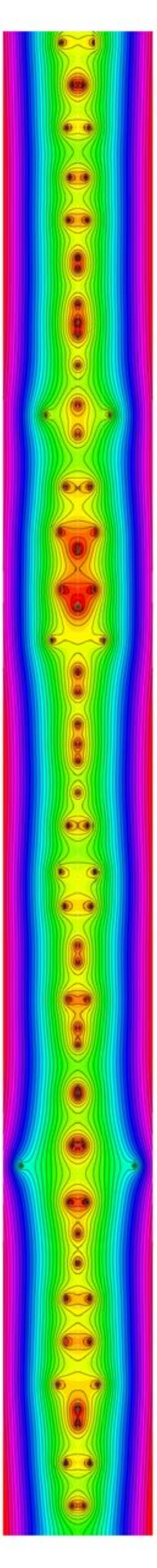}}
\scalebox{0.04}{\includegraphics{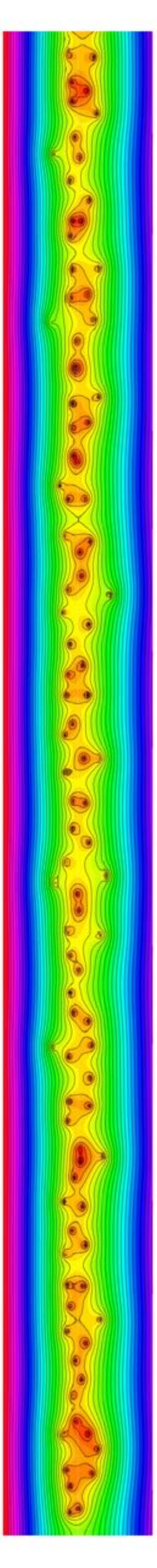}}
\scalebox{0.04}{\includegraphics{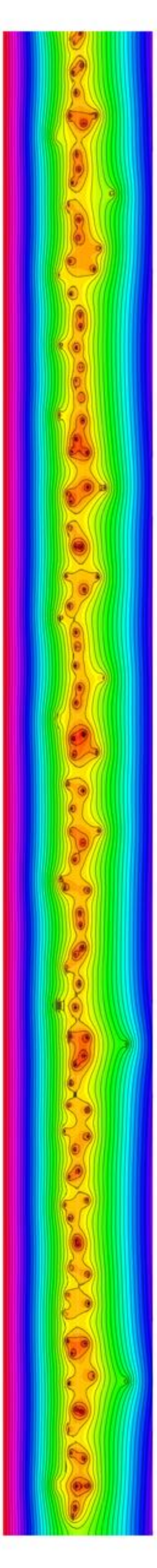}}
\scalebox{0.04}{\includegraphics{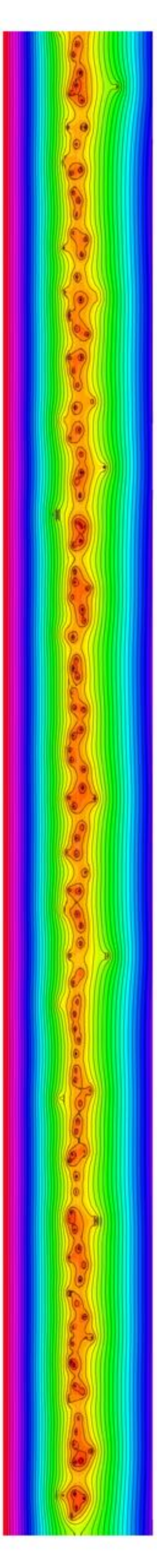}}
\scalebox{0.04}{\includegraphics{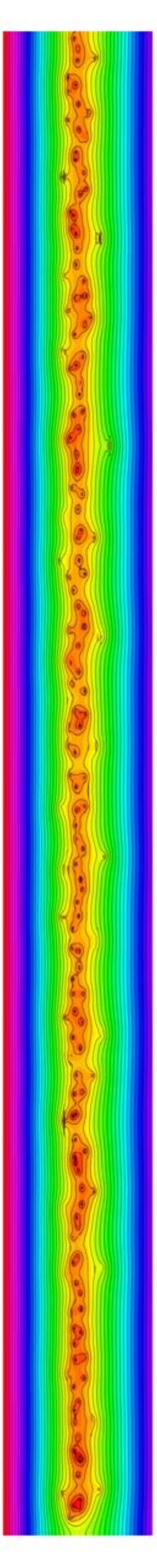}}
\scalebox{0.04}{\includegraphics{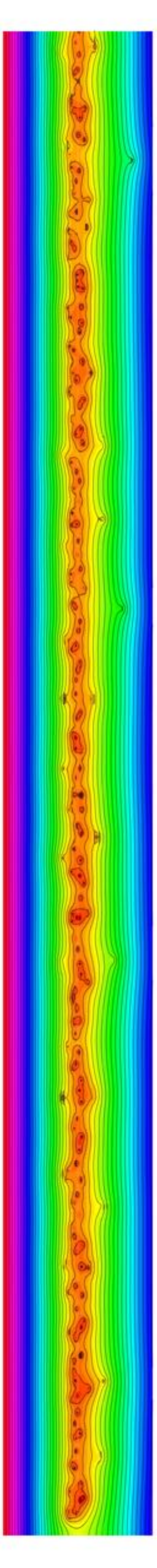}}
\caption{
The level curves of $|\zeta_{L^2}|$ for the Whitney complexes
of some Erd\"os-R\'enyi type graphs in $E(20,p)$, where the
edge probability is $p=0.1,0.2,\dots, 0.6$. 
In this case, the complexes had $70,108,169,288$, and $600$ 
simplices.
}
\end{figure}

\begin{figure}
\scalebox{0.04}{\includegraphics{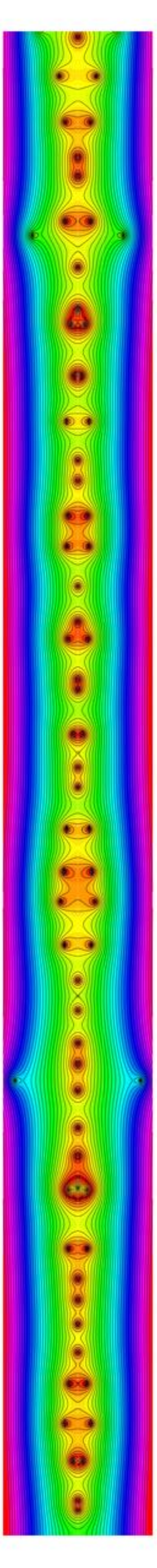}}
\scalebox{0.04}{\includegraphics{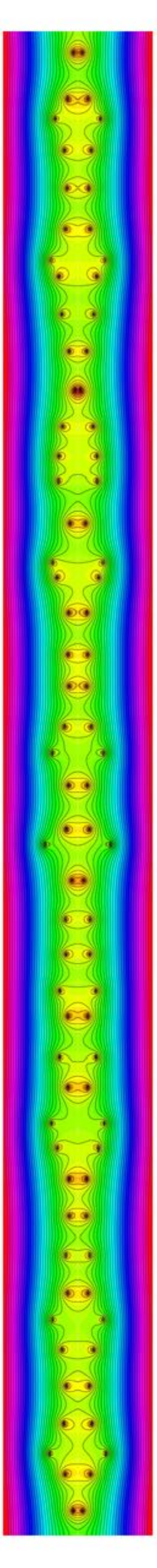}}
\scalebox{0.04}{\includegraphics{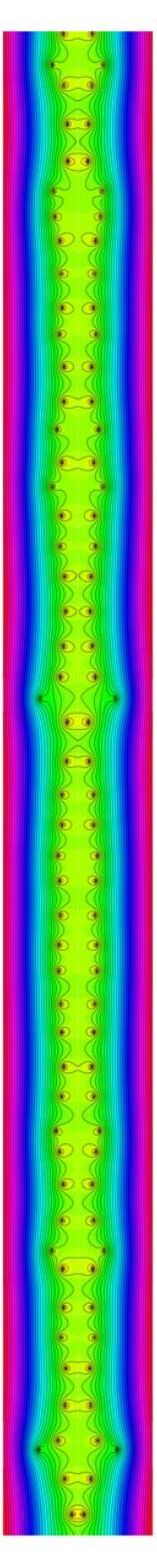}}
\scalebox{0.04}{\includegraphics{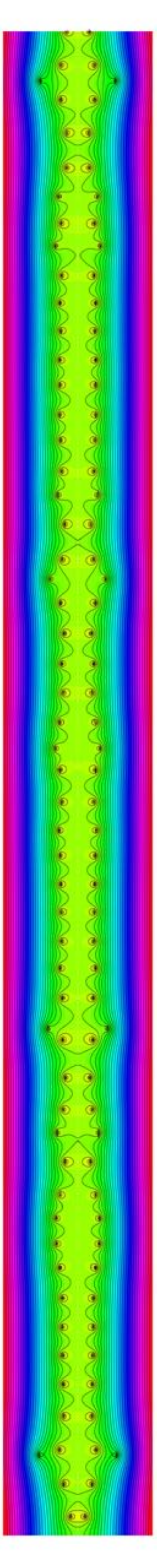}}
\scalebox{0.04}{\includegraphics{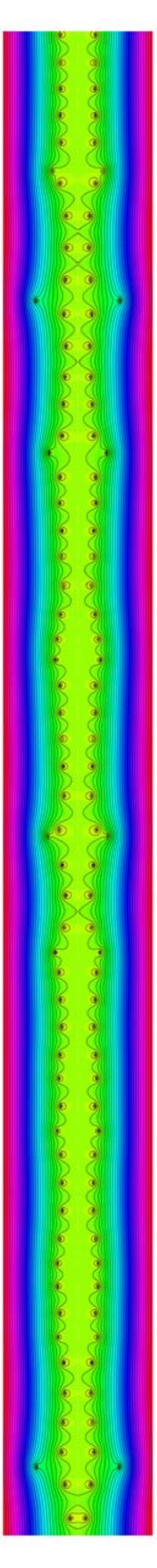}}
\scalebox{0.04}{\includegraphics{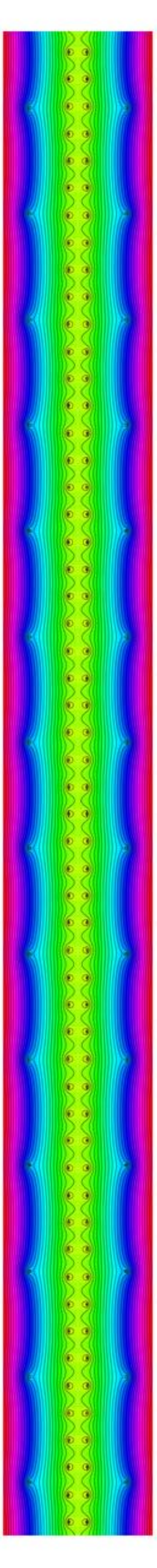}}
\caption{
The level curves of $|\zeta_{L^2}|$ for the 1D skeleton
complexes of graphs in $E(20,p)$ with
$p=0.1,0.2,\dots, 0.5$ and $p=1$.
In this case, the complex size is $|V|+|E|=20+190 p$. We see
the spectral symmetry.
}
\end{figure}

\vfill

\bibliographystyle{plain}

\end{document}